\pgfplotsset{compat=1.12}
\def\JS{\textcolor{blue}}
\def\JS{\textcolor{black}}
\definecolor{darkgreen}{rgb}{0,0.6,0}
\def\eps{\varepsilon}
\newcommand{\normX}[1]{\JS{\lVert}#1\JS{\rVert}}
\newcommand{\args}{(\cdot)}
\newcommand{\Xtraj}[1]{\JS{X_{\mathbf{U}}(#1,X_0)}}
\newcommand{\XtrajShort}[1]{\JS{X(#1)}}
\newcommand{\Xstraj}[1]{\JS{X^s(#1)}}
\newcommand{\XstrajShort}[1]{\JS{X^s(#1)}}
\newcommand{\R}{\mathbb{R}}
\newcommand{\N}{\mathbb{N}}
\newcommand{\K}{\mathcal{K}}
\newcommand{\U}{\mathcal{U}}
\newcommand{\X}{\mathcal{X}}
\newcommand{\W}{\mathcal{W}}
\newcommand{\F}{\mathcal{F}}
\newcommand{\Y}{\mathbb{Y}}
\newcommand{\XX}{\mathbb{X}}
\newcommand{\UU}{\mathbb{U}}
\newcommand{\FF}{\mathbb{F}}
\newcommand{\St}{\mathbb{S}}
\newcommand{\PP}{\mathcal{P}}
\newcommand{\Lp}[2][p]{L^{#1}(\Omega,\F_k,\mathbb{P};#2)}
\newcommand{\Llp}[2][p]{L^{#1}(\Omega,\F,\mathbb{P};#2)}
\newcommand{\Pset}{\mathcal{S}}
\newcommand{\Dset}{\mathcal{D}}
\newcommand{\Mset}{\mathcal{M}}
\newcommand{\Lset}{\mathcal{L}}
\newcommand{\Normal}{\mathcal{N}}
\newcommand{\Prob}{\mathbb{P}}
\newcommand{\E}{\mathbb{E}}
\newcommand{\Exp}[1]{\mathbb{E}\left[#1\right]}
\newtheorem{thm}{Theorem}[section]
\newtheorem{prop}[thm]{Proposition}
\newtheorem{lem}[thm]{Lemma}
\newtheorem{bsp}[thm]{Example}
\newtheorem{defn}[thm]{Definition}
\newtheorem{rem}[thm]{Remark}
\def\BibTeX{{\rm B\kern-.05em{\sc i\kern-.025em b}\kern-.08em
    T\kern-.1667em\lower.7ex\hbox{E}\kern-.125emX}}
\begin{document}
\title{On the relationship between stochastic turnpike and dissipativity notions}
\author{Jonas Schießl, Michael H. Baumann, Timm Faulwasser, \IEEEmembership{Senior Member, IEEE}, and Lars Grüne
\thanks{This work was funded by the Deutsche Forschungsgemeinschaft (DFG, German Research Foundation) \-- project number 499435839. }
\thanks{Jonas Schießl, Michael H. Baumann, and Lars Grüne are with Mathematical Institute, University of Bayreuth, Germany (e-mail: jonas.schiessl@uni-bayreuth.de; michael.baumann@uni-bayreuth.de; lars.gruene@uni-bayreuth.de). }
\thanks{Timm Faulwasser was with Institute for Energy Systems, Energy Efficiency and Energy Economics, TU Dortmund University, Germany when major parts of this work were done. Current address: Institute of Control Systems, Hamburg University of Technology, Hamburg, Germany (e-mail: timm.faulwasser@ieee.org).}}

\maketitle

\begin{abstract}
    In this paper, we introduce and study different dissipativity notions and different turnpike properties for discrete-time stochastic nonlinear optimal control problems. The proposed stochastic dissipativity notions extend the classic notion of Jan C. Willems to $\mathit{L^r}$ random variables and to probability measures.
    Our stochastic turnpike properties range from a formulation for random variables via turnpike phenomena in probability and in probability measures to the turnpike property for the moments.  Moreover, we investigate how different metrics (such as Wasserstein or Lévy-Prokhorov) can be leveraged in the analysis.  Our results are built upon stationarity concepts in distribution and in random variables and on the formulation of the stochastic optimal control problem as a finite-horizon Markov decision process.
    We investigate how the proposed dissipativity notions connect to the various stochastic turnpike properties and we work out the link between 
    different forms of dissipativity.
\end{abstract}

\begin{IEEEkeywords}
Stochastic optimal control, stochastic systems, dissipativity, stability of nonlinear systems
\end{IEEEkeywords}

\section{Introduction}
    \IEEEPARstart{D}{issipativity} and strict dissipativity as introduced by Willems \cite{willems1972a,willems1972b} and the so-called turnpike property---first observed by Ramsey \cite{Ramsey1928} and von Neumann \cite{Neumann1945}---are important tools for analyzing the qualitative properties of optimally controlled systems. While dissipativity states that a system cannot contain more energy than supplied from the outside, the turnpike property describes the phenomenon that optimal and near-optimal trajectories spend most of the time near a particular solution, which in the simplest case is an optimal steady state of the system. The connection of these properties to model predictive and receding horizon control \cite{Faulwasser2018, Faulwasser2022, Gruene2022, Gruene2013, Breiten2020, Angeli2011, Mueller2014, Koehler2018} has fueled recent research activities. The topic also received attention in fields such as optimal control with partial differential equations \cite{Gugat2021,Gugat2023,Trelat2018,Zuazua2017}, shape optimization \cite{Lance2020}, or mean field games \cite{Porretta2018}.
    
    For deterministic optimal control problems, the relation between dissipativity and the turnpike property \JS{is} well understood. Loosely speaking, under mild condition strict dissipativity implies the turnpike property while the converse is true under somewhat more restrictive conditions \cite{Faulwasser2017, Gruene2018, Gruene2016}. 
    For stochastic optimal control problems, this relation is so far only very little explored.
    One of the challenges that arises here is to identify the appropriate probabilistic objects for which such an analysis can be carried out.
    Two possible such objects are the distributions and the moments of the solutions of the optimally controlled system, and most of the current work about stochastic turnpike properties are using one of these, e.g.\ \cite{Kolokoltsov2012, Marimon1989, Sun2022}. The approach via distributions is also closely related to a probability \JS{measure-based} definition of dissipativity \JS{introduced in \cite{Gros2022} for general Markov decision processes and in \cite{Kordabad2022} for discounted stochastic optimal control problems}.
    While this approach can be used to characterize the qualitative behavior of the probability distribution and moments of optimal solutions, it does not allow to obtain {\em pathwise} information, since 
    two random variables with the same distribution are indistinguishable through the lens of moments and measures. 
    \JS{However, recent numerical results in \cite{Ou2021} show that pathwise turnpike properties can expected to be observed in \JS{measurements from} real-world applications.}
    
    Motivated by this observation, the authors recently introduced an alternative dissipativity notion for discrete-time linear-quadratic (LQ) optimal control problems, using the representation of the stochastic states and controls via random variables, see \cite{CDCPaper, CDCExtension}. 
    It turns out that this notion can be used to analyze both the distributional {\em and} the pathwise behavior of the problem. Furthermore, we have shown that the stationary distribution in our approach is characterized by the same optimization problem as for the probability \JS{measure-based} approach, cf.\ \cite[Remark 5.3]{CDCExtension}.
    Nevertheless, the exact connection between these two dissipativity notions is not yet clear and the available analysis is so far limited to LQ optimal control problems.

    \JS{The need to analyze stochastic turnpike and dissipativity properties is of particular application interest due to the deep connection to model predictive control (MPC). Typically, dissipativity and turnpike assumptions are at the core of the analysis of MPC schemes in terms of their performance and stability.
    In \cite{PerformanceCDC} it was already shown that stochastic turnpike properties can be used to obtain near-optimal performance estimates for stochastic economic MPC, and we strongly conjecture that the presented notions of stochastic dissipativity and turnpike can also be used to prove corresponding stochastic stability properties. First stability results for stochastic MPC have already been presented, for example, in \cite{Sopasakis2017,Mcallister2022} and in \cite{Kordabad2022} using stochastic dissipativity.} 
    
    Thus, the contribution of this paper is twofold: \JS{W}e investigate 
    the relationship between these two dissipativity concepts and 
    we carry out a dissipativity and turnpike analysis for general discrete-time nonlinear stochastic optimal control problems. To this end, we introduce four types of turnpike properties---in $L^r$, pathwise in probability, in distribution, and for the moments---and show that they form a hierarchy.
    Crucially, the metric used for the distributions (e.g., Wasserstein or Lévy-Prokhorov) determine the details of the considered hierarchy. 
    We furthermore investigate how the strict dissipativity concepts---for the random variable and for distributions---relate to each other and which of the four turnpike properties they imply. Figure \ref{fig:sketch_of_results}, \JS{at the end of the paper}, gives an overview of the implications we obtain. Compared to the LQ case, the proofs for the nonlinear case require a completely different way of reasoning, since the tools from LQ optimal control like the superposition principle and the Riccati equation are not available.
    
    The remainder of the paper is structured as follows. Section~\ref{sec:Setting} introduces the stochastic optimal control problems under consideration and the different forms of dissipativity and turnpike behavior that we study. Moreover, we define the stationarity concepts used later. 
    In Section~\ref{sec:MainResults}, we present the main results. 
    We show which turnpike properties are induced by the different dissipativity notions and analyze the connection of these notions. 
    Furthermore, we show that the stationary distribution needed in the strict dissipativity and turnpike formulations can be characterized by an optimality condition.
    The theoretical findings are illustrated by a nonlinear example in Section~\ref{sec:Example} while Section~\ref{sec:Conclusion} concludes the paper.

\section{Setting and definitions} \label{sec:Setting}

    Before presenting the main results of this paper in Section~\ref{sec:MainResults}, we start by introducing the stochastic optimal control problems under consideration. Moreover, we define the different concepts of stochastic dissipativity and turnpike used for our investigations.

    \subsection{The stochastic optimal control problem}
    We consider discrete-time stochastic control systems described by a function
    \begin{equation*}
        f: \X \times \U \times \W \rightarrow \X, \quad (x,u,w) \mapsto f(x,u,w),
    \end{equation*}
    which is continuous in $(x,u) \in \X \times \U$ for almost all realizations $w \in \W$, where $\X$, $\U$, and $\W$ are separable Banach-spaces. The control system is then defined by the iteration
    \begin{equation} \label{eq:stochSys}
        X(k+1) = f(X(k),U(k),W(k)), \quad X(0) = X_0,
    \end{equation}
    where $W(k)$ is independent of $X(k)$ and $U(k)$ and $(W(0),W(1),\ldots)$ is an \emph{i.i.d.} sequence of random variables with distribution $\varrho_W$.
    \JS{Let $\normX{\cdot}$ be an arbitrary norm on $\mathcal{X}$ such that the space $\mathcal{X}$ equipped with this norm is a Banach-space.}
    In order to define the stochastic state and input spaces for the system, 
    we \JS{consider a probability space $(\Omega,\F,\Prob)$} and denote by $\Llp[n]{\X}$ the space of $\F$-measurable random variables $X: \Omega \rightarrow \X$ with finite $n$-th moments, 
    \JS{such that the norm}
    \begin{equation*}
        \Vert X \Vert_{L^n} := \Exp{\normX{X}^n}^{1/n} = \left( \int_{\Omega} \normX{X}^n ~ \mbox{d}\Prob \right)^{1/n}
    \end{equation*}
    \JS{exists}.
    We assume $X(k) \in \Lp[n]{\X}$ and $U(k) \in \Lp[l]{\U}$, where $(\F_k)_{k \in \N_0}$ is the smallest filtration such that $X$ is an adapted process, i.e.
    \begin{equation*}
        \F_k = \sigma(X(0),\ldots,X(k)) \subseteq \F, \quad k \in \N_0.
    \end{equation*}
    This choice of the stochastic filtration induces a causality requirement, which ensures that the control action $U(k)$ at time $k$ only depends on the initial condition and on the sequence of past disturbances $(W(0),\ldots,W(k-1))$ but not on future events. 
    We refer to \JS{\cite{Fristedt1997, Protter2005}} for more details on stochastic filtrations.
    In addition to the filtration condition we impose constraints $(X,U) \in \Y \subseteq \Llp[n]{\X} \times \Llp[l]{\U}$ on the stochastic state $X$ and control $U$ which are assumed to be law-invariant, i.e., if $(X,U) \in \Y$ then $(Y,V) \in \Y$ also holds for all $(X,U) \sim (Y,V)$. Here $(X,U) \sim (Y,V)$ means that the pair $(X,U)$ has the same joint distribution as $(Y,V)$. 
    Law-invariant constraints can be, for instance, chance constraints, constraints on single moments, or constraints associated to law-invariant risk measures like the value at risk.
    We define the sets 
    \begin{equation*}
    \begin{split}
        \XX := \lbrace X \in &\Llp[n]{\X} \mid \\
        &\exists U \in \Llp[\JS{l}]{\U}  ~:~ (X,U) \in \Y \rbrace \phantom{.}
    \end{split}
    \end{equation*}
    and
    \begin{equation*}
    \begin{split}
        \UU := \lbrace U \in &\Llp[\JS{l}]{\U} \mid \\ 
        &\exists X \in \Llp[n]{\X} ~:~ (X,U) \in \Y \rbrace.
    \end{split}
    \end{equation*}
    A control sequence $\mathbf{U} := (U(0),\ldots,U(N-1)) \in \UU^N$ is called admissible for $X_0 \in \XX$ if $U(k) \in \Lp[l]{\U}$, $(X(k),U(k)) \in \Y$ for all $k=0,\ldots,N-1$, and $X(N) \in \XX$. 
    In this case, the corresponding trajectory is also called admissible. 
    The set of admissible control sequences is denoted by $\UU^N(X_0)$. 
    Likewise, we define $\UU^{\infty}(X_0)$ as the set of all control sequences $\mathbf{U} \in \UU^{\infty}$ with $U(k) \in \Lp[l]{\U}$ and $(X(k),U(k)) \in \Y$ for all $k \in \N_0$. 
    For technical simplicity, we assume that $\XX$ is control invariant, i.e., that $\UU^{\infty}(X_0) \neq \emptyset$ for all $X_0 \in \XX$.
    For an initial value $X_0 \in \XX$ \JS{and} a control sequence $\mathbf{U} \in \UU^{N}(X_0)$, 
    we denote the trajectories of \eqref{eq:stochSys} as $\Xtraj{\cdot}$ or short $\XtrajShort{\cdot}$ if there is no ambiguity about $X_0$ and $\mathbf{U}$.
    \JS{Note, that the solution $\Xtraj{\cdot}$ also depends on the disturbance $\mathbf{W}$. 
    However, for the sake of readability, we do not highlight this in our notation and assume in the following that $\mathbf{W} := (W(0),\ldots, W(N-1))$ is an arbitrary but fixed stochastic process.}
    
    In order to extend the stochastic system \eqref{eq:stochSys} to a stochastic optimal control problem, we consider stage costs 
    $\ell: \Y \rightarrow \R$, 
    which are also assumed to be law-invariant, i.e., for any two pairs $(X,U)$ and $(Y,V)$ it holds that $\ell(X,U) = \ell(Y,V)$ if $(X,U) \sim (Y,V)$.
    \JS{The class of law-invariant cost includes the expected cost $\E[g(X,U)]$ for some function $g:\mathcal{X} \times \mathcal{U} \rightarrow \mathcal{X}$, which is widely used in stochastic optimal control, but can also involve additional variance penalization or other kinds of risk aware formulations.}
    
    Then, in summary, the stochastic optimal control problem on horizon $N \in \N \cup \{\infty\}$ under consideration reads
    \begin{equation} \label{eq:stochOCP1}
        \begin{split}
            \min_{\mathbf{U} \in \UU^{N}(X_0)} &J_N(X_0,\mathbf{U}) := \sum_{k=0}^{N-1} \ell(X(k),U(k)) \\
            s.t. ~ X(k+1) &= f(X(k),U(k),W(k)), ~ X(0) = X_0 
        \end{split}
    \end{equation}

    \subsection{Markov policies and stationarity}
    In stochastic settings, the concept of an equilibrium of a deterministic system has to be extended to a more general stationarity concept.
    Hence, a structured analysis of stationary solutions to \eqref{eq:stochSys} is pivotal for our further developments.
    It is known, see, e.g.\,  \cite{CDCPaper,CDCExtension}, and in fact quite obvious that in general a pair of constant random variables $(X,U) \in \Llp[n]{\X} \times \Llp[l]{\U}$ is not a steady state of \eqref{eq:stochSys} if the system is subject to persistent excitation by the noise. 
    A natural remedy here is to look for constant distributions rather than for constant solutions and this is the first form of stationarity we consider in this paper. 
    However, as we want to analyze the behavior of the random variable $X(k)$ as an element of $\Llp[n]{\X}$ and also pathwise, looking at distributions alone does not provide sufficient information. Hence, in the work at hand we also replace the deterministic steady states by stationary processes $X(k)$ and $U(k)$, i.e., by solutions of \eqref{eq:stochSys} that vary with time $k$ but whose distributions are constant over time.

    In order to formalize this, we first need to show that for obtaining optimal stationary solutions---or, in fact, general optimal solutions---we can restrict ourselves to control sequences that do not depend on $X(0),\ldots,X(k)$ but only on $X(k)$.
    \JS{To this end, we first notice that for every control process $U\args$ which is adapted to the filtration $(\F_k)_{k \in \N_0}$ , i.e. $\sigma(U(k)) \subseteq \F_k$, we can find a measurable function $\pi_k: \X^{k+1} \rightarrow \U, k \in \N_0$, also called feedback strategy or policy, such that $U(k) = \pi_k(X(0),\ldots,X(k))$
    holds for all  $k \in \N_0$. This well-known result, see for instance \cite[Lemma~1.14]{Kallenberg2021}, shows that we can equivalently minimize over sequences $\boldsymbol{\pi} = (\pi_0,\pi_1,\ldots)$ instead of over adapted control processes $U\args$ satisfying the assumed filtration condition.
    However, since we demand that $U(k) \in \Lp[l]{\U} \subseteq \Llp[l]{\U}$, we have to ensure that $U(k) = \pi_k(X(0),\ldots,X(k)) \in \Llp[l]{\U}$
    for all $k \in \N_0$ when switching to feedback strategies.
    Thus, we introduce the set $\Pi_k(\X,\U)$ given by 
    \begin{equation*}
    \begin{split}
        \Pi_k(\X,\U) := \lbrace \pi : \X^{k+1} \rightarrow \U \mid \pi \circ X \in \Llp[l]{\U} \qquad \\
        \mbox{ for all } X \in (\Llp[n]{\X})^{k+1} \rbrace.
    \end{split}
    \end{equation*}
    Although we can now use feedback strategies instead of control processes, the problem that the admissible set depends on the time still remains, since $\Pi_k \subset \Pi_{k+1}$ holds for all $k \in \N_0$.
    However, using \cite[Theorem 6.2]{Altman2021} one can prove that the solution of the optimal control problem \eqref{eq:stochOCP1} can always be characterized by a sequence of so-called Markov policies, i.e., policies $\boldsymbol{\pi}=(\pi_0,\ldots,\pi_{N-1})$ with $\pi_k \in \Pi_0(\X,\U)$ for all $k=0,\ldots,N-1$. Here the name ``Markov policy'' for $\boldsymbol{\pi}$ refers to the fact that for $U(k) = \pi_k \circ X(k)$ the system \eqref{eq:stochSys} defines a Markov chain.}

    This restriction has the advantage that we can replace the time dependent filtration condition by constraining the controls to Markov policies, which only depend on the current state and not the whole history of the states. 
    Markov policies greatly simplify the analysis of the optimally controlled system because we know that for a Markov \JS{policy} $\pi$, the system $$X(k+1) = f(X(k),\pi(X(k)),W(k))$$
    defines a Markov chain. This directly implies that for every Markov policy $\pi$ there is a transition probability $p_{\pi}: \X \rightarrow \mathcal{P}(\JS{\X})$ only depending on the distribution of the noise such that $\Prob (x(k+1) \in A \vert x(k) = y) = p_{\pi}(y,A)$
    for every $A \in \mathcal{B}(\X)$, almost every $x \in \X$ and every $k \in \N$, see \cite{Meyn1993}. 
    \JS{Here $\mathcal{P}(\X)$ denotes the set of all probability measures on $\X$ and $\mathcal{B}(\X)$ is the Borel $\sigma$-algebra on $\X$.}
    Thus, for  $X_k \sim \varrho_k$ and any Markov policy $\pi \in \Pi_0(\X,\U)$ we can compute the distribution $\varrho_{k+1}$ of $X(k+1)$ using the transition operator given by 
    \begin{equation}\label{eq:kernel}
        \varrho_{k+1} = \mathcal{T}(\pi,\varrho_k) = \int_{\X} p_{\pi}(y,\pi(y)) \varrho_k(dy).
    \end{equation}
    Put differently, for a given policy $\pi$ the distribution $\varrho_{k+1}$ only depends on the distribution $\varrho_{k}$ but not on the particular form of $X(k)$ as a random variable. 
    In turn, this implies that for a given Markov policy the forward propagation of the distribution of the state does also only depend on the current distribution. 
    Hence, $f(X,\pi(X),W) \sim f(\widetilde X,\pi(\widetilde X),W)$
    holds for all $\widetilde X \sim X$ and $W \sim \rho_W$ independent of the particular choice of $W$,  $X$ and $\widetilde{X}$. 
    
    We want to use \JS{these} properties in the remainder of the paper. Thus, henceforth, the optimal control problem under consideration reads
    \begin{equation} \label{eq:stochOCP}
        \begin{split}
            \min_{\mathbf{U} \in \FF^{N}(X_0)} &J_N(X_0,\mathbf{U}) := \sum_{k=0}^{N-1} \ell(X(k),U(k)) \\
            s.t. ~ X(k+1) &= f(X(k),U(k),W(k)), ~ X(0) = X_0 
        \end{split}
    \end{equation}
    which is problem \eqref{eq:stochOCP1} restricted to the set of Markov policies \JS{$\FF^N(X_0) \subseteq \UU^N(X_0)$ defined as} 
    \begin{equation*}
    \begin{split}
        \FF^N(X_0) := \{\mathbf{U} \in &\UU^N(X_0) \mid \exists \pi_k \in \Pi_0(\X,\U): \\
        &\qquad U(k) = \pi_k(X(k)), ~ k=0,\ldots,N \}.
    \end{split}
    \end{equation*}
    \begin{rem} \label{rem:opt_pi}
\JS{Each $\pi_k$ in the definition of $\FF^N(X_0)$ maps the values of $X(k)$ pointwise to the values of $\pi_k(X(k))$, i.e., if we consider $\pi_k(X(k))$ as a random variable, then $\pi_k(X(k))(\omega) = \pi_k(X(k,\omega))$. Nevertheless, an optimal $\pi_k$ as part of a sequence $\mathbf{U}$ minimizing \eqref{eq:stochOCP}, in general, also depends on the distribution of $X(k)$. In other words, for two optimal solutions $X(k)$ and $\widetilde X(k)$ with different distributions at time $k$, we will in general obtain two different optimal feedback laws $\pi_k\ne \tilde \pi_k$ at time $k$.}

\JS{
Only in exceptional cases $\pi_k$ will be independent of $X(k)$. This happens, e.g., when there are no constraints and when the cost is of the form $\ell(X,U)= \E[g(X,U)]$, as in linear-quadratic optimal control. This is because in this case minimization of the expectation is equivalent to minimization of the individual solutions starting in $X(k,\omega)$, $\omega\in\Omega$. However, this equivalence does not hold in general.}
    \end{rem}
    We emphasize again that the set of optimal solutions to \eqref{eq:stochOCP1} and \eqref{eq:stochOCP} coincides, although  \eqref{eq:stochOCP} in general has a smaller set of feasible solutions\JS{.} 
    
    Now we have all the technical prerequisites to introduce the two different concepts of stationarity for our stochastic system that we deal with in this paper.  
    To this end, we define the set 
    \begin{equation*}
        \PP_n(\X) := \left\lbrace \mu \in \PP(\X) ~ \middle| ~ \int_{\X} \Vert x \Vert^{n} \mu(\mbox{d}x) < \infty \right\rbrace,
    \end{equation*}
    where $\PP(\X)$ denotes the set of all probability measures. 
    It is directly clear that for every random variable $X \in \Llp[n]{\X}$ with $X \sim \varrho$ we have $\varrho \in \PP_n(\X)$ and, thus, the set $\PP_n(\X)$ has a natural connection to $\Llp[n]{\X}$.
    The next definition now formalizes stationarity in \JS{the} sense of distributions. \JS{This stationarity concept is well established in stochastic control and the probability measure defining the stationary distribution is also known as invariant measure, see for instance \cite{Altman2021,Meyn1989,Hernandez2012}.}

    \begin{defn}[Stationary policy and distribution] \label{defn:stationaryDistr}
        A distribution-policy pair $(\varrho^s_X,\pi^s)$ with $\varrho^s_X \in \PP_n(\X)$ and $\pi^s \in \Pi_0(\X,\U)$ is called stationary for system \eqref{eq:stochSys} if for all $X \sim \varrho^s$ and disturbances $W \sim \varrho_W$ it holds that $f(X,\pi^s(X),W) \sim \varrho^s_X$.
    \end{defn}

    Note that Definition~\ref{defn:stationaryDistr} looks at a one step problem and, hence, only considers Markov policies $\pi^s \in \Pi_0(\X,\U)$.
    Therefore, as already mentioned above, from \cite{Meyn1993} we know that there is a transition operator $\mathcal{T}: \PP_n(\X) \times \Pi_0(\X,\U) \rightarrow \PP_n(\X)$ only depending on the distribution $\varrho_W$ of the noise which can be used to directly forward propagate the distribution of the states. 
    This means that we can alternatively define the pair $(\varrho^s_X,\pi^s)$ as a steady state of the transition operator, i.e.
    \begin{equation} \label{eq:StatTransition}
        \mathcal{T}(\varrho_X^s,\pi^s) = \varrho_X^s.
    \end{equation}
    This also shows that Markov policies are sufficient to characterize stationary distributions since we can iteratively reapply $\pi^s$ to keep the distribution constant for all $k \in \N_0$. \JS{The crux of using transition operators~\eqref{eq:kernel} is the difficulty to state them in explicit form.}
    
    Since we also want to analyze the behavior of solutions in $\JS{\Llp[n]{\X}}$ as well as pathwise behaviors, we also make use of a stationarity concept of the random variables forming the solutions on the probability space $(\Omega,\F,\Prob)$. 
    To this end, we define the subset $\Gamma(\X)$ of $\PP_n(\X)$ by
    \begin{equation*}
        \Gamma(\X) := \lbrace \varrho \in \PP_n(\X) \mid \exists X \in \Llp[n]{\X}:~X \sim \varrho \rbrace 
    \end{equation*}
    and assume $(\varrho^s_X,\pi^s) \in \Gamma(\X) \times  \Pi_0(\X,\U)$ to ensure that we can always find a random variable $X \sim \varrho^s_X$.
    The \JS{next} definition now extends the concept of stationarity from Definition~\ref{defn:stationaryDistr} to a stochastic process. The definition is equivalent \JS{to the one given in \cite{Doob1953}, since the iteration \eqref{eq:sys_stat} defines a Markov-chain. However, we state it in a form that is more convenient for our pathwise considerations than the definition by the joint distributions of the states for different time instants in \cite{Doob1953}.}

    \begin{defn}[Stationary stochastic processes] \label{defn:stationaryProcess}
        Given a sequence of disturbances $\mathbf{W} = (W(0),W(1),\ldots)$, the pair of the stochastic processes $(\mathbf{X}^s,\mathbf{U}^s)$ given by
        \begin{equation} \label{eq:sys_stat}
            \Xstraj{k+1} = f(\Xstraj{k}, U^s(k), W(k))
        \end{equation}
        with $\mathbf{U}^s \in \FF^{\infty}(X^s(0))$ is called stationary for system \eqref{eq:stochSys} if 
        \begin{equation*}
        \begin{split}
            \JS{\Xstraj{k}} \sim \varrho^s_X, \quad U(k) \sim \varrho^s_U, \quad (\Xstraj{k},U^s(k)) \sim \varrho^s_{X,U}
        \end{split}
        \end{equation*}
        for all $k \in \N_0$. 
    \end{defn}
    For any stationary stochastic process $(\mathbf{X}^s,\mathbf{U}^s)$ in the sense of Definition \ref{defn:stationaryProcess} the corresponding distribution $\varrho^s_X$ is stationary in the sense of Definition \ref{defn:stationaryDistr}, i.e.\ a steady state of the transition operator \eqref{eq:StatTransition}. At the first glance this is not fully obvious, because the $\pi_k$ defining $\mathbf{U}^s\in\FF^{\infty}(X^s(0))$ in Definition \ref{defn:stationaryProcess} may depend on time while $\pi^s$ in Definition \ref{defn:stationaryDistr} does not. However, because of equation \eqref{eq:kernel} each $\pi_k$ has the property required for $\pi^s$ in Definition \ref{defn:stationaryDistr}.
    
    Conversely, for any stationary distribution there exists a stationary stochastic process\JS{.} 
    \JS{In particular, for a stationary feedback $\pi^s \in \Pi_0(\X,\U)$ with stationary distribution $\varrho_X^s \in \Gamma(\X)$ such that $(X^s,\pi^s(X^s)) \in \Y$ for $X^s \sim \varrho^s_X$, the pair $(\mathbf{X}^s,\mathbf{U}^s)$ given by 
    \begin{equation} \label{eq:equiDistrStat}
    \begin{split}
        \Xstraj{k+1} &= f(\Xstraj{k}, U^s(k), W(k)), \\
        U^s(k) &= \pi^s(\Xstraj{k})
    \end{split}
    \end{equation}
    is stationary for system \eqref{eq:stochSys} for any $\Xstraj{0} \sim \varrho_X^s$.}
    %
    This shows that Definition~\ref{defn:stationaryProcess} is a natural extension of the stationarity concepts defined by the transition operator \eqref{eq:StatTransition} to stochastic processes. The main difference between the stationarity in distribution usually used in the context of Markov decision processes and Definition~\ref{defn:stationaryProcess} is that we impose the additional condition that the state process representing the stationary distribution follows the evolution of the stochastic system \eqref{eq:stochSys}.

    \subsection{Dissipativity and strict dissipativity}
    The goal of this paper is to define (strict) dissipativity notions for nonlinear stochastic optimal control problems that allow to conclude a wide range of stochastic turnpike properties. The next two definitions provide these notions. 
    The first one \JS{which is based on \cite{Gros2022}} deals with the distributional behavior of the problem, while the second one is formulated for the random variables defining the solution process, such that we can gain information about the realization paths of the solutions.
    \JS{To this end, we use the class of comparison functions
    \begin{equation*}
        \begin{split}
            \K_{\infty} := \{ \alpha : \R^+_0 \rightarrow \R_0^+ ~\mid~ & \alpha \mbox{ continuous, strictly increasing,} \\
            &\mbox{and unbounded with } \alpha(0)=0 \}.
        \end{split}
    \end{equation*}
    }%
    \JS{Furthermore, w}e denote by $P_X : \mathcal{B}(\X) \rightarrow [0,1] \in \PP_n(\X)$ the push forward measure of the random variable $X \in \Llp[n]{\X}$ which characterizes its distribution and is defined by 
    $P_X(A) := \JS{\Prob}(X^{-1}(A))$ for all $A \in \mathcal{B}(\X)$.
    Note that for a stationary process with distribution $\varrho_X^s$ we have $P_{\Xstraj{k}} = \varrho_X^s$ for all $k \in \N_0$.
    For a Markov \JS{policy} $\pi \in \Pi_0(\X,\U)$ we abbreviate the push forward measure of the subsequent state $X^+ := f(X,\pi(X),W)$ as $P_{X^+}$, since this measure does not depend on the exact representation of $W$.

    \begin{defn}[Distributional dissipativity] \label{defn:DissiDistr}
        Given a stationary feedback $\pi^s$ and a stationary distribution $\varrho_X^s$, we call the stochastic optimal control problem \eqref{eq:stochOCP} strictly distributionally dissipative at $\varrho_X^s$, if there exists a storage function $\Lambda: \mathcal{P}(\X) \rightarrow \R$ bounded from below, a metric $d_D$ on the space $\PP_n(\X)$, and a function $\alpha \in \K_{\infty}$ such that 
        \begin{equation} \label{eq:DissiIneqDistr}
        \begin{split}
            \ell(X,\pi(X)) - \ell(X^s,\pi^s(X^s)) + \Lambda(&P_X) - \Lambda(P_{X^+}) \\
            &\geq \alpha(d_D(P_X, \varrho^s_X))
        \end{split}
        \end{equation}
        holds for all $(X,\pi(X)) \in \Y$ and $X^s \sim \varrho^s_X$. The system is called distributionally dissipative, if inequality \eqref{eq:DissiIneqDistr} holds with $\alpha \equiv 0$.
    \end{defn}
    
    Since our stage costs and constraints are law-invariant, we can define  $\hat{\ell}(\varrho,\pi) := \ell(X,\pi(X))$ for $X \sim \varrho$ and also consider the optimal control problem in probability measures, leading to the finite-horizon Markov decision problem
    \begin{equation} \label{eq:stochOCPmeasure}
        \begin{split}
            \min_{\boldsymbol{\pi} \in \St^N(\varrho_0)} \hat{J}_N(\rho_0,&\boldsymbol{\pi}) :=\sum_{k=0}^{N-1} \hat{\ell}(\varrho_k,\pi_k) \\
            s.t. ~ \varrho_{k+1} &= \mathcal{T}(\varrho_k,\pi_k), ~\varrho_0 = \rho_0
        \end{split}
    \end{equation}
    with 
    \begin{equation*}
    \begin{split}
        &\St^N(\varrho_0) := \{ \boldsymbol{\pi} \in \Pi_0(\X,\U)^N \mid \\
        &\quad \mathbf{U} = (\pi_0(X(0)),\ldots,\pi_{N-1}(X(N-1))) \in \FF^N(X_0 \sim \rho_0) \} 
    \end{split}
    \end{equation*}
    for some initial distribution $\rho_0 \in P_{\XX} := \{ \varrho \in \Gamma(\X) \mid X \in \XX \mbox{ for } X \sim \varrho \}$. This problem yields the same optimal strategies as problem \eqref{eq:stochOCP} for $\varrho_0 = P_{X_0}$.
    Since in Definition~\ref{defn:DissiDistr} we only consider Markov policies, the  strict dissipation inequality \eqref{eq:DissiIneqDistr} can thus be written as 
    \begin{equation*}
        \hat{\ell}(\varrho_k,\pi) - \hat{\ell}(\varrho_X^s,\pi^s) + \Lambda(\varrho_k) - \Lambda(\varrho_{k+1}) \geq \alpha(d_D(\varrho_{k},\varrho^s )),
    \end{equation*}
    which is the dissipativity notion based on probability measures introduced in \cite{Gros2022}, except that 
    \JS{we restrict ourselves to metrics on probability measures instead of dissimilarity measures. This is due to the fact that we want to be able to distinguish two probability measures by their distance $d_D$, which is not possible for a dissimilarity measure $\hat{d}_D$ since it also allows $\hat{d}_D(\varrho_1,\varrho_2)=0$ for $\varrho_1 \neq \varrho_2$.}
    \JS{Note that this concept of dissipativity is related to the approach of rewriting the stochastic optimal control problem as a deterministic one in spaces of probability measures, cf.\ \cite[Section~9.2]{Bertsekas1978}, and then using classical dissipativity notions for deterministic systems in infinite dimension.}
    For the purpose of this paper, Definition~\ref{defn:DissiDistr} simplifies the transition between the distributional and the random variable perspective.
    This is important since next we also propose a dissipativity notion formulated in random variables, which takes the paths of the stochastic processes into account formalized in the following definition. 
    \JS{This definition extends the $L^2$ dissipativity introduced in \cite{CDCPaper,CDCExtension} to general $L^r$ spaces.}
    Here and in the following, we use that for a stationary process $(\mathbf{X}^s,\mathbf{U}^s)$ due to the law-invariance of $\ell$ the value $\ell(\Xstraj{k},U^s(k))$ is independent of $k$. We denote this value by $\ell(\mathbf{X}^s,\mathbf{U}^s)$.

    \begin{defn}[$L^r$ dissipativity in random variables] \label{defn:DissiLp}
        Consider a pair of stationary stochastic processes $(\mathbf{X}^s,\mathbf{U}^s)$ for a given noise sequence $\mathbf{W} = (W(0),W(1),\ldots)$ according to Definition~\ref{defn:stationaryProcess}.
        Then we call the stochastic optimal control problem \eqref{eq:stochOCP} strictly dissipative in $L^r$ at $X^s$, if there exists a storage function $\lambda: \N_0 \times \Llp[n]{\X} \rightarrow \R$ bounded from below and a function $\alpha \in \K_{\infty}$ such that 
        \begin{equation} \label{eq:DissiIneqLp}
        \begin{split}
            &\ell(\XtrajShort{k},U(k)) - \ell(\mathbf{X}^s,\mathbf{U}^s) \\
            &\qquad \quad + \lambda(k,\XtrajShort{k}) - \lambda(k+1,\XtrajShort{k+1}) \\ 
            &\geq \alpha(\Exp{\normX{ X(k,\mathbf{W}) - X^s(k,\mathbf{W}) }^r}) \\
            &\qquad \quad =  \alpha(\Vert \XtrajShort{k} - \XstrajShort{k} \Vert_{L^r}^r)
        \end{split}
        \end{equation}
        holds for all $k \in \N_0$, $X_0 \in \XX$ and $\mathbf{U} \in \FF^{\infty}(X_0)$. The system is called $L^r$ dissipative if inequality \eqref{eq:DissiIneqLp} holds with $\alpha \equiv 0$.
    \end{defn}
    \begin{rem}[The role of $L^r$ in \eqref{eq:DissiIneqLp}]
    \JS{In case of strict $L^r$ dissipativity, the monotonicity  of $\alpha \in \mathcal K$ combined with $\Llp[r]{\X} \subset \Llp[k]{\X}, k>r$ gives that if \eqref{eq:DissiIneqLp} holds for $L^{k}$ is also holds for $L^{r}$ with $r \leq k$.}

    \JS{In case of non-strict dissipativity ($\alpha \equiv 0$), the dissipation inequality \eqref{eq:DissiIneqLp} does not depend on any norm of $\Llp[r]{\X}$. Hence non-strict $L^{r}$ dissipativity for some $r \in \{1, \dots, n\}$ implies non-strict $L^{r}$ dissipativity for all $r \in \{1, \dots, n\}$.}
    \end{rem}
    \begin{rem}[Non-Markovian extensions]
        Definitions~\ref{defn:stationaryProcess}~and~\ref{defn:DissiLp} can be easily extended to the non-Markovian setting by considering $\UU^N(X_0)$ instead of $\FF^N(X_0)$. 
        However, in this case the storage function $\lambda$ from Definition~\ref{defn:DissiLp} may implicitly depend on the whole history of the state process due to the underlying filtration. 
        The reason for not considering this generalization is that such history-dependent $\lambda$ significantly complicate the analysis while not providing any benefit in our setting, because the optimal policies are Markov policies\JS{.} 
        Moreover, for Definition~\ref{defn:DissiDistr} such a generalization is not possible, since for its formulation it is crucial that the information of the current distribution and feedback law is sufficient to compute the distribution of the subsequent state, which is only possible in the Markov setting using the transition operator \eqref{eq:kernel}.
    \end{rem}

    \subsection{Turnpike properties}

    At last we introduce four types of turnpike properties \JS{which are generalizations of the one presented in \cite{CDCPaper,CDCExtension} for the stochastic LQ case}. Examining their connection with the dissipativity notions from Definition~\ref{defn:DissiDistr} and Definition~\ref{defn:DissiLp} is the main purpose of this paper. We will do this in the next section. 
    Note that in the following definition the metric $d_D$ used for the probability measures in part \ref{defn:TurnpikeDistr}) is arbitrary. However, some of our results will only hold for specific metrics, which we define in Definition~\ref{def:wasserstein_weak}.

    \begin{defn}[Stochastic turnpike properties] \label{defn:stochTurnpike}
        Consider a pair of stationary stochastic processes $(\mathbf{X}^s,\mathbf{U}^s)$ for a given noise sequence $\mathbf{W} = (W(0),W(1),\ldots)$ according to Definition~\ref{defn:stationaryProcess} and set $\mathcal{N} = \{0,\ldots,N\}$. Then the stochastic optimal control problem is said to have the
        \begin{enumerate}
            \item \textbf{$L^r$ turnpike property} for some $r \leq n$ if \JS{there exists $\alpha_1 \in \K_{\infty}$ such that the following holds: For each $X_0 \in \XX$ there exists a $C_a > 0$} 
            such that for each $\delta > 0$ and $N \in \N$, each control sequence $\mathbf{U} \in \FF^N(X_0)$ satisfying the condition $J_N(X_0,\mathbf{U}) \leq \delta + N \ell(\mathbf{X}^s,\mathbf{U}^s)$ and each $\eps > 0$ the value 
            \begin{equation*}
            \begin{split}
                \Lset^r_{\eps} := \# \lbrace &k \in \mathcal{N} \mid \Exp{\normX{ \Xtraj{k} - \XstrajShort{k} }^r} \leq \eps \rbrace
            \end{split}
            \end{equation*}
            satisfies the inequality $\Lset^r_{\eps} \geq N - (\delta + C_a)/\alpha_1(\eps)$.
            \label{defn:TurnpikeLp}
            \item \textbf{pathwise-in-probability turnpike property} if \JS{there exists $\alpha_2 \in \K_{\infty}$ such that the following holds: For each $X_0 \in \XX$ there exists a $C_b > 0$} 
            such that for each $\delta > 0$ and $N \in \N$, each control sequence $\mathbf{U} \in \FF^N(X_0)$ satisfying the condition $J_N(X_0,\mathbf{U}) \leq \delta + N \ell(\mathbf{X}^s,\mathbf{U}^s)$ and each $\eps > 0$ the value 
            \begin{equation*}
            \begin{split}
                \Pset_{\eps} := \# \lbrace k &\in \mathcal{N} \mid \\
                &\Prob (\normX{ \Xtraj{k} - \XstrajShort{k} } \leq \eps) \geq 1-\eps \rbrace
            \end{split}
            \end{equation*}
            satisfies the inequality $\Pset_{\eps} \geq N - (\delta + C_b)/\alpha_2(\eps)$. 
            \label{defn:TurnpikeProb}
            \item \textbf{distributional turnpike property} if \JS{there exists $\alpha_3 \in \K_{\infty}$ and a metric $d_D$ on the space $\PP_n(\X)$ such that the following holds: For each $X_0 \in \XX$ there exists a $C_c > 0$}
            such that for each $\delta > 0$ and $N \in \N$, each control sequence $\mathbf{U} \in \FF^N(X_0)$ satisfying the condition $J_N(X_0,\mathbf{U}) \leq \delta + N \ell(\mathbf{X}^s,\mathbf{U}^s)$ and each $\eps > 0$ the value 
            \begin{equation*}
                \Dset_{\eps} := \# \left\lbrace k \in \mathcal{N} \mid d_D(P_{\Xtraj{k}}, \varrho_X^s) \leq \eps \right\rbrace
            \end{equation*}
            satisfies the inequality $\Dset_{\eps} \geq N - (\delta + C_c)/\alpha_3(\eps)$.
            \label{defn:TurnpikeDistr}
            \item \textbf{$r$-th moment turnpike property} for some $r \leq n$ if \JS{there exists $\alpha_4 \in \K_{\infty}$ such that the following holds: For each $X_0 \in \XX$ there exists a $C_d > 0$} 
            such that for each $\delta > 0$ and $N \in \N$, each control sequence $\mathbf{U} \in \FF^N(X_0)$ satisfying the condition $J_N(X_0,\mathbf{U}) \leq \delta + N \ell(\mathbf{X}^s,\mathbf{U}^s)$ and each $\eps > 0$ the value 
            \begin{equation*}
            \begin{split}
                \Mset^r_{\eps} &:= \# \Big\lbrace k \in \mathcal{N} \mid \\
                &\Big \vert \E[\normX{ \Xtraj{k} }^r]^{1/r} - \E[\normX{ \XstrajShort{k} }^r]^{1/r} \Big \vert \leq \eps \Big\rbrace
            \end{split}
            \end{equation*}
            satisfies the inequality $\Mset^r_{\eps} \geq N - (\delta + C_d)/\alpha_4(\eps)$. \label{defn:TurnpikeMoments}
        \end{enumerate}
    \end{defn}
    
    The turnpike behaviors given in the above definition state that those solutions to the stochastic optimal control problem \eqref{eq:stochOCP}, whose cost is close to that of the stationary process $(\mathbf{X}^s,\mathbf{U}^s)$, most of the time stay close to the stationary state process $\mathbf{X}^s$. The difference of the four turnpike behaviors \ref{defn:TurnpikeLp})--\ref{defn:TurnpikeMoments}) in Definition~\ref{defn:stochTurnpike} lies in the way how ``close'' is defined, i.e., in how we measure the distance between the solutions and the stationary state process. 
    
    In \ref{defn:TurnpikeLp}), we use the $L^r$-norm to define the neighborhood of the stationary process, while in \ref{defn:TurnpikeProb}), this neighborhood is defined in an appropriate in-probability sense for the solutions paths. 
    The advantage of the in-probability definition is that it gives us a more descriptive interpretation of the pathwise solution behavior. 
    It says that the probability that a single realization path does not have the turnpike property in the deterministic sense is low. 
    This pathwise property of a realization is easy to be recognized in numerical simulations, without the need to know the stationary process, which is often difficult to calculate in applications. 
    Specifically, in a preceding simulation study ~\cite{Ou2021} we have observed this pathwise property. 
    In particular, for different initial conditions but identical disturbance realization paths, we observed that the middle part of finite-horizon optimal trajectories behaved very \JS{similar} as all solutions were close to the \textit{pathwise-in-probability turnpike}.
    
    In part \ref{defn:TurnpikeDistr}) of Definition~\ref{defn:stochTurnpike}, we focus on the underlying distributions of the random variables and say that a solution is near to $\mathbf{X}^s$ if their distributions are close to each other.
    Although the norm used for random variables is defined by the selection of the Banach-space $\X$, the norm used to measure the distance between distributions is still arbitrary in our setting, and several choices can lead to stronger or weaker statements about the behavior of the distributions. We specify two special cases of the turnpike and dissipativity in distribution. 
    \JS{For more details on the metrics used in the definition below we refer to \cite{Villani2009,Rachev1991}.}
    
    \begin{defn}\label{def:wasserstein_weak}
    \begin{enumerate}
        \item We say that the problem is (strictly) Wasserstein dissipative or has the Wasserstein turnpike property of order $r$, respectively, for some $r \leq n$ if Definition \ref{defn:DissiDistr} or Definition \ref{defn:stochTurnpike}(c), respectively, holds with $d_D$ being the Wasserstein distance of order $r$, i.e., 
    \begin{equation*}
    \begin{split}
        &d_D(P_X,P_Y) = d_{W_r}(P_X, P_Y)  \\
        &:= \inf \left\lbrace \left(\Exp{\normX{ \bar{X} - \bar{Y} }^r} \right)^{1/r}: ~ \bar{X} \sim P_X,~\bar{Y} \sim P_Y \right\rbrace.
    \end{split}
    \end{equation*} \label{defn:WassersteinTurnpike}
    \item We say that the problem is (strictly) weakly distributionally dissipative or has the weak distributional turnpike property, respectively, if Definition \ref{defn:DissiDistr} or Definition \ref{defn:stochTurnpike}(c), respectively, holds with $d_D$ being the Lévy–Prokhorov metric, i.e.,
    \begin{equation*}
    \begin{split}
        d_D(&P_X,P_Y) =  d_{LP}(P_X, P_Y ) \\
        &:= \inf \left\lbrace d_{KF} (\bar{X}, \bar{Y}) :~ \bar{X} \sim P_X,~\bar{Y} \sim P_Y \right\rbrace,
    \end{split}
    \end{equation*}
    where 
    \begin{equation*}
        d_{KF}(X,Y):= \inf \left\lbrace \eps > 0 : \Prob \left( \normX{ X - Y } > \eps \right) \leq \eps \right\rbrace
    \end{equation*}
    denotes the Ky-Fan-metric for two random variables $X,Y \in \Llp[n]{\X}$. \label{defn:WeakTurnpike}
    \end{enumerate}
    \end{defn}
    Here, the term ``weak'' in \ref{defn:WeakTurnpike}) refers to the fact that Lévy–Prokhorov metric can be used to characterize the weak convergence of measures.

    The reason why these metrics are of particular interest to us will become clear when we compare them to the distance measures defining our turnpike properties regarding the random variables in Section \ref{sec:diss_imp_Lr}. We can directly observe that the Wasserstein distance has a natural connection to $L^r$ norm while the Key-Fan metric is close to how we define the neighborhood of $\mathbf{X}^s$ for the pathwise-in-probability turnpike property.
    Finally, part \ref{defn:TurnpikeMoments}) of Definition \ref{defn:stochTurnpike} formalizes that the $r$-th moments of the respective solutions are close to each other most of the time.

    \begin{rem}
        In general it is possible that there is no control $U \in \UU^N$ satisfying the performance bound $J_N(X_0,\mathbf{U}) \leq \delta + N \ell(\mathbf{X}^s,\mathbf{U}^s)$ demanded in Definition~\ref{defn:stochTurnpike}. 
        Thus, to guarantee that there are such control sequences one has to impose suitable controllability or reachability assumptions on the stochastic system, similar to, e.g., the analysis in \cite[Section~6]{Gruene2013} in the deterministic case. 
        However, the technical details of such assumptions are beyond the scope of this paper.
    \end{rem}

    \begin{rem}
        Since our definitions for turnpike and dissipativity are concerned with measuring the distance between state processes, one could also refer to these properties as \emph{state} turnpike or \emph{state} dissipativity, respectively. 
        Another approach to gain additional information about the behavior of the control processes is to measure also the distances of the controls or policies in these definitions, which would lead to the definition of \emph{input-state} turnpike and \emph{input-state} dissipativity.
        These stronger properties \JS{hold, for example,} in the standard stochastic linear-quadratic setting, cf.\ \cite{CDCPaper}. 
        \JS{In addition, one could also take the joint distributions of the state-control pairs into account, by defining an appropriate distance measure.}
        \JS{W}e conjecture that one could extend the analysis in this paper to \JS{these forms of} input-state turnpike properties \JS{by} using suitable {metrics and norms for the corresponding quantities and imposing additional} regularity assumptions on the stochastic problem. 
        \JS{In particular, we expect that the main results of this paper also hold in this setting and that we can extend the characterization from Theorem~\ref{thm:stationaryOpti} such that not only the optimal state distribution but also the optimal control policy or distribution is unique, assuming an appropriate form of \emph{input-state} dissipativity}
        \JS{However,} in order to keep this paper concise we leave \JS{the details of these extensions as questions} for future research.
    \end{rem}
    
    \JS{\begin{rem}
        An important question connected to the turnpike properties given in Definition~\ref{defn:stochTurnpike} is how to compute the distances between the solutions and the stationary process appearing in this definition. 
        While for the distributional turnpike and the $r$-th moment turnpike property it is sufficient to know the distributions (or single moments) of the solutions at each time $k$, the $L^r$ turnpike and the pathwise-in-probability turnpike property takes also the joint distribution of the solution $X(k)$ and the stationary process $X^s(k)$ into account. 
        Note that, in general, knowing the distributions of $X(k)$ and $X^s(k)$ is not sufficient to compute their joint distribution, unless the two states are stochastically independent. However, since the two processes are computed with respect to the same disturbance sequence $\mathbf{W}$ this is usually not the case for $k > 0$. 
        Nevertheless, the evaluation of the distance measures with respect to the joint distributions is still possible by forward propagation of the entire random variables or by pathwise Monte Carlo sampling as in \cite[Section~6]{CDCExtension}.
    \end{rem}}

    \section{Main results} \label{sec:MainResults}

    In this section, we investigate the relationship between the different turnpike and dissipativity notions.
    Furthermore, we introduce the concept of optimal stationary solutions replacing the optimal deterministic steady state in stochastic systems. 

    \subsection{Implications of strict $\mathit{L^r}$ dissipativity} \label{sec:diss_imp_Lr}
    
    We start with the following theorem, which shows that $L^r$ dissipativity implies the turnpike property in the same sense. Its proof uses the same arguments as in deterministic settings, cf.\ \cite{Gruene2013, Gruene2018a}.
    \JS{In particular, analogous to the deterministic case the proof uses the concept of rotated stage costs. These stage costs are defined as  
    \begin{equation} \label{eq:rot_stagecosts}
    \begin{split}
        \tilde{\ell}(\XtrajShort{k},U(k))
        &:= \; \ell(\XtrajShort{k},U(k)) - \ell(\mathbf{X}^s,\mathbf{U}^s) \\
            &\quad + \lambda(k,\XtrajShort{k}) - \lambda(k+1,\XtrajShort{k+1}),
    \end{split}
    \end{equation}
    and, thus, collect the terms on the left-hand side of the dissipation inequality \eqref{eq:DissiIneqLp}. 
    }
    
    \begin{thm} \label{thm:DissiTurnpikeLp}
        Assume that the stochastic optimal control problem \eqref{eq:stochOCP} is strictly $L^r$ dissipative. Then it has the $L^s$ turnpike property for all $\JS{1\leq s \leq r}$.
    \end{thm}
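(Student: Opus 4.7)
The plan is to follow the standard rotated-cost argument from the deterministic turnpike theory, adapted to the random-variable setting, and then bridge the gap between $L^r$ and $L^s$ via a Jensen-type comparison.

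First I would work with the rotated stage costs $\tilde{\ell}$ defined in \eqref{eq:rot_stagecosts} and introduce the rotated cost functional $\tilde{J}_N(X_0,\mathbf{U}) := \sum_{k=0}^{N-1} \tilde{\ell}(X(k),U(k))$. Because the storage terms $\lambda(k,X(k))-\lambda(k+1,X(k+1))$ telescope, one obtains
\begin{equation*}
\tilde{J}_N(X_0,\mathbf{U}) = J_N(X_0,\mathbf{U}) - N\ell(\mathbf{X}^s,\mathbf{U}^s) + \lambda(0,X_0) - \lambda(N,X(N)).
\end{equation*}
Using the assumed lower bound $\lambda \geq M$, the hypothesis $J_N(X_0,\mathbf{U}) \leq \delta + N\ell(\mathbf{X}^s,\mathbf{U}^s)$ yields $\tilde{J}_N(X_0,\mathbf{U}) \leq \delta + C_a$, where $C_a := \lambda(0,X_0) - M$ depends only on $X_0$.

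Next I would invoke strict $L^r$ dissipativity: each rotated stage cost is nonnegative and satisfies $\tilde{\ell}(X(k),U(k)) \geq \alpha(\Vert X(k)-X^s(k)\Vert_{L^r}^r)$. Summing gives
\begin{equation*}
\sum_{k=0}^{N-1} \alpha\bigl(\Vert X(k)-X^s(k)\Vert_{L^r}^r\bigr) \leq \delta + C_a.
\end{equation*}
The step from $L^r$ to $L^s$ for $1\leq s \leq r$ uses Jensen's inequality applied to the concave map $t\mapsto t^{s/r}$, giving $\mathbb{E}[\normX{Z}^s] \leq \mathbb{E}[\normX{Z}^r]^{s/r}$. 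Hence whenever $\mathbb{E}[\normX{X(k)-X^s(k)}^s] > \eps$, we obtain $\Vert X(k)-X^s(k)\Vert_{L^r}^r > \eps^{r/s}$. Defining $\alpha_1(\eps) := \alpha(\eps^{r/s})$, which lies in $\K_\infty$ since $\alpha \in \K_\infty$ and $\eps \mapsto \eps^{r/s}$ is a $\K_\infty$ function, we conclude that for every index $k$ outside the $L^s$ turnpike set the summand is at least $\alpha_1(\eps)$.

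A standard counting argument then finishes the proof: the number of indices $k\in\{0,\dots,N\}$ violating $\mathbb{E}[\normX{X(k)-X^s(k)}^s] \leq \eps$ is at most $(\delta + C_a)/\alpha_1(\eps)$, so the complementary set has cardinality at least $N - (\delta+C_a)/\alpha_1(\eps)$, which is exactly the required bound in Definition~\ref{defn:stochTurnpike}\ref{defn:TurnpikeLp}) with constant $C_a$ and comparison function $\alpha_1$. The only subtle point is the $L^r$-versus-$L^s$ conversion --- all other steps are direct transcriptions of the deterministic dissipativity-implies-turnpike argument to the random-variable setting --- and this obstacle is resolved cleanly via Jensen's inequality.
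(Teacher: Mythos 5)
Your proposal is correct and follows essentially the same route as the paper: bound the telescoped rotated cost $\tilde{J}_N$ by $\delta + C_a$, lower-bound each rotated stage cost via the strict dissipation inequality, and count the indices where the distance exceeds $\eps$. The only cosmetic difference is that you handle the case $s<r$ by an explicit Jensen argument with $\alpha_1(\eps)=\alpha(\eps^{r/s})$, whereas the paper invokes the equivalent monotonicity $\Vert X-Y\Vert_{L^s}\leq\Vert X-Y\Vert_{L^r}$ of $L^p$-norms on a probability space.
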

    \begin{proof}
        \JS{Choose $C_a > \lambda(0,X_0) - M \geq 0$}
        where $M \in \R$ is a uniform lower bound on $\lambda$ from Definition \ref{defn:DissiLp} for all $k \in \N_0$.
        Then for all $\mathbf{U} \in \FF^N(X_0)$ with $J_N(X_0,\mathbf{U}) < \delta + N\ell(\mathbf{X}^s,\mathbf{U}^s)$ we get 
        \begin{equation} \label{eq:turnpikeContr}
            \begin{split}
                \tilde{J}_N(X_0,\mathbf{U}) \JS{:=}&  \sum_{k=0}^{N-1} \ell(\XtrajShort{k},U(k)) - \JS{\ell(\mathbf{X}^s,\mathbf{U}^s)} \\
                &\quad + \lambda(k,\XtrajShort{k}) - \lambda(k+1,\XtrajShort{k+1})) \\
                =& \; J_N(X_0,\mathbf{U}) - N \ell(\mathbf{X}^s,\mathbf{U}^s) \\ 
                & + \lambda(0,\XtrajShort{0}) - \lambda(N,\XtrajShort{N}) \leq \delta + C_a.
            \end{split}
        \end{equation}
        Now assume that $\Lset^r_{\eps} < N-(\delta+C_a)/\alpha_1(\eps)$ with $\alpha_1 = \alpha \in \K_{\infty}$ from Definition~\ref{defn:DissiLp}. Then there is a set $\mathcal{M} \subset \lbrace 0,\ldots,N-1 \rbrace$ of $N-\Lset^r_{\eps} > (\delta+C_a)/\alpha_1(\eps)$ time instants such that 
        $\E [ \Vert \XtrajShort{k} - \XstrajShort{k} \Vert^r ] \geq \eps$ for all $k \in \mathcal{M}$. Using inequality \eqref{eq:DissiIneqLp} from Definition~\ref{defn:DissiLp}, this implies 
        \begin{equation*}
            \begin{split}
                \tilde{J}_N(X_0,\mathbf{U}) \geq& \sum_{k=0}^{N-1} \alpha_1\left(\E\big[ \normX{ \XtrajShort{k} - \XstrajShort{k} }^r\big]\right) \\
                \geq& \sum_{k \in \mathcal{M}} \alpha_1\left(\E\big[  \normX{ \XtrajShort{k} - \XstrajShort{k} }^r \big]\right) \\
                >& \dfrac{\delta+C_a}{\alpha_1(\eps)} \alpha_1(\eps) = \delta + C_a
            \end{split}
        \end{equation*}
        which contradicts \eqref{eq:turnpikeContr} and, thus, proves the theorem for $s=r$. For $s < r$ the assertion follows directly from $\Vert X - Y \Vert_{L^s} \leq \Vert X - Y \Vert_{L^r}$ for all $X,Y \in \Llp[n]{\X}$.
    \end{proof}

    The following theorem shows that one can also obtain the pathwise-in-probability turnpike property from the $L^r$ turnpike property and, thus, from the strict $L^r$ dissipativity.

    \begin{thm} \label{thm:TurnpikeLpProb}
        Assume that the stochastic optimal control problem \eqref{eq:stochOCP} has the $L^r$ turnpike property. Then it also has the pathwise-in-probability turnpike property.
    \end{thm}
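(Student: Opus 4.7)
The plan is to deduce the pathwise-in-probability turnpike from the $L^r$ turnpike via a single application of Markov's (Chebyshev-type) inequality, which is the natural bridge between the $L^r$ norm (an expectation) and tail probabilities of individual realizations. The key algebraic observation is that for any random variable $Z$ and any $\eps > 0$, one has $\Prob(\normX{Z} > \eps) \leq \E[\normX{Z}^r]/\eps^r$; hence if the $L^r$ deviation $\E[\normX{X(k)-X^s(k)}^r]$ is small of order $\eps^{r+1}$, then the tail probability $\Prob(\normX{X(k)-X^s(k)} > \eps)$ is small of order $\eps$, which is exactly what is demanded for membership in the set defining $\Pset_\eps$.

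Concretely, first I would fix $X_0 \in \XX$, let $C_a > 0$ and $\alpha_1 \in \K_\infty$ be the constant and comparison function provided by the $L^r$ turnpike property, and consider any admissible $\mathbf{U} \in \FF^N(X_0)$ with $J_N(X_0,\mathbf{U}) \leq \delta + N\ell(\mathbf{X}^s,\mathbf{U}^s)$ together with an arbitrary $\eps > 0$. Applying the $L^r$ turnpike not at $\eps$ but at the auxiliary threshold $\eps' := \eps^{r+1}$ yields a set $\mathcal{L} \subseteq \{0,\ldots,N\}$ of cardinality at least $N - (\delta + C_a)/\alpha_1(\eps^{r+1})$ on which $\E[\normX{X(k)-X^s(k)}^r] \leq \eps^{r+1}$. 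For every such $k$, Markov's inequality gives
\begin{equation*}
    \Prob\bigl(\normX{X(k)-X^s(k)} > \eps\bigr) \;\leq\; \frac{\E[\normX{X(k)-X^s(k)}^r]}{\eps^r} \;\leq\; \frac{\eps^{r+1}}{\eps^r} \;=\; \eps,
\end{equation*}
so that $\Prob(\normX{X(k)-X^s(k)} \leq \eps) \geq 1-\eps$, which means $k$ belongs to the set defining $\Pset_\eps$. Hence $\mathcal{L} \subseteq \mathcal{P}_\eps$ and in particular $\Pset_\eps \geq \Lset^r_{\eps^{r+1}} \geq N - (\delta+C_a)/\alpha_1(\eps^{r+1})$.

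To finish, I would set $\alpha_2(\eps) := \alpha_1(\eps^{r+1})$ and $C_b := C_a$. The function $\alpha_2$ is continuous, strictly increasing, unbounded, and vanishes at zero (as a composition of two such functions), so $\alpha_2 \in \K_\infty$, and the required inequality $\Pset_\eps \geq N - (\delta+C_b)/\alpha_2(\eps)$ holds for every admissible $\mathbf{U}$, every $\delta > 0$, every $N \in \N$, and every $\eps > 0$. I do not expect a significant obstacle here: the argument is a clean one-line probabilistic estimate followed by a redefinition of the comparison function, with no need to revisit dissipativity, the cost $J_N$, or the stationary process beyond what the $L^r$ turnpike already encodes. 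The only mild subtlety worth flagging in the write-up is that $\alpha_2$ depends on the fixed integer $r$ (via the exponent $r+1$), which is fine because $r$ is part of the hypothesis.
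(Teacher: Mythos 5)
Your proposal is correct and follows essentially the same route as the paper's own proof: apply the $L^r$ turnpike property at the auxiliary threshold $\eps^{r+1}$, use Markov's inequality to convert the moment bound into the tail bound $\Prob(\normX{X(k)-X^s(k)} > \eps) \leq \eps$, and conclude with $\alpha_2(\eps) = \alpha_1(\eps^{r+1})$ and $C_b = C_a$. No meaningful differences.
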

    \begin{proof}
        Using the Markov inequality, we get 
        \begin{equation} \label{eq:MarkovIneq}
        \begin{split}
            \Prob\big(\normX{ \XtrajShort{k} - \XstrajShort{k} } \geq \eps \big) \leq \dfrac{1}{\eps^r}\Exp{\normX{ \XtrajShort{k} - \XstrajShort{k} }^r}
        \end{split}
        \end{equation}
        for all $r \in \N$.
        Further, by the definition of the turnpike $L^r$ property, cf.\ Definition~\ref{defn:stochTurnpike}, we know that for the $\K_{\infty}$ function $\alpha_2(\eps) := \alpha_1(\eps^{r+1})$ there are at least $N-(\delta+C_a)/\alpha_1(\eps^{r+1})$ time instants for which $\E [ \normX{ \XtrajShort{k} - \XstrajShort{k} }^r ] \leq \eps^{r+1}$.
        Using equation \eqref{eq:MarkovIneq}, this gives 
        $\Prob\left( \normX{ \XtrajShort{k} - \XstrajShort{k} } \geq \eps \right) \leq \eps$
        for all these time instants which implies 
        $\Prob\left( \normX{ \XtrajShort{k} - \XstrajShort{k} } \leq \eps \right) \geq 1-\eps$
        and, thus, proves the claim with $C_b = C_a$.
    \end{proof}

    Next, we show that pathwise turnpike in probability, in turn, implies a turnpike property for the underlying distributions. However, in general, this turnpike property cannot be concluded for an arbitrary metric on $\PP_n(\X)$. It does, however, hold for the Lévy–Prokhorov metric, as the following theorem shows.

    \begin{thm} \label{thm:TurnpikeProbDistr}
        If the stochastic optimal control problem \eqref{eq:stochOCP} has the pathwise-in-probability turnpike property then it also has the weak distributional turnpike property in the sense of part \ref{defn:WeakTurnpike}) of Definition \ref{def:wasserstein_weak}.
    \end{thm}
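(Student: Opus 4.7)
The plan is to chain together the definitions of the Ky--Fan metric and the Lévy--Prokhorov metric to transfer the pathwise-in-probability turnpike bound directly to a bound on the distance between distributions, without losing any quantitative factor.

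First, I would fix $X_0\in\XX$, let $C_b>0$ and $\alpha_2\in\K_\infty$ be the constants supplied by the pathwise-in-probability turnpike, and pick any admissible $\mathbf{U}\in\FF^N(X_0)$ with $J_N(X_0,\mathbf{U})\leq\delta+N\ell(\mathbf{X}^s,\mathbf{U}^s)$. By Definition~\ref{defn:stochTurnpike}\ref{defn:TurnpikeProb}), at least $N-(\delta+C_b)/\alpha_2(\eps)$ time instants $k\in\mathcal{N}$ satisfy
\begin{equation*}
\Prob\bigl(\normX{\Xtraj{k}-\XstrajShort{k}}\leq\eps\bigr)\geq 1-\eps,
\end{equation*}
which is equivalent to $\Prob(\normX{\Xtraj{k}-\XstrajShort{k}}>\eps)\leq\eps$. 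By the definition of the Ky--Fan metric in Definition~\ref{def:wasserstein_weak}\ref{defn:WeakTurnpike}), this gives $d_{KF}(\Xtraj{k},\XstrajShort{k})\leq\eps$ for each such $k$.

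Next, I would exploit that $(\Xtraj{k},\XstrajShort{k})$ is, by construction, a coupling of the marginals $P_{\Xtraj{k}}$ and $\varrho^s_X$, since both processes are driven by the same noise sequence $\mathbf{W}$ and the stationary process satisfies $P_{\XstrajShort{k}}=\varrho^s_X$ for all $k$. Taking the infimum in the definition of $d_{LP}$ over all couplings can only decrease the distance, so
\begin{equation*}
d_{LP}(P_{\Xtraj{k}},\varrho^s_X)\leq d_{KF}(\Xtraj{k},\XstrajShort{k})\leq\eps.
\end{equation*}
Hence each of the time instants counted in $\Pset_\eps$ is also counted in $\Dset_\eps$ when $d_D=d_{LP}$.

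Combining the two inequalities yields $\Dset_\eps\geq\Pset_\eps\geq N-(\delta+C_b)/\alpha_2(\eps)$, which is exactly the weak distributional turnpike property with $\alpha_3:=\alpha_2$ and $C_c:=C_b$. I do not foresee a real obstacle here: the argument is essentially the standard fact that the Lévy--Prokhorov distance between two laws is dominated by the Ky--Fan distance of any coupling, combined with the fact that our setup \emph{provides} the canonical coupling through the shared noise path $\mathbf{W}$. The only subtle point worth flagging is the last observation---that $(\Xtraj{k},\XstrajShort{k})$ is indeed a valid coupling of the two marginals---which relies on the common-noise convention fixed in the setting and on Definition~\ref{defn:stationaryProcess}.
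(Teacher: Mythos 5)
Your proposal is correct and follows essentially the same route as the paper's proof: convert the in-probability bound into a Ky--Fan bound $d_{KF}(\Xtraj{k},\XstrajShort{k})\leq\eps$, then observe that $d_{LP}(P_{\Xtraj{k}},\varrho^s_X)$ is an infimum over all couplings and is therefore dominated by the Ky--Fan distance of the particular coupling at hand, yielding the claim with $\alpha_3=\alpha_2$ and $C_c=C_b$. Your explicit flagging of why $(\Xtraj{k},\XstrajShort{k})$ is a valid coupling is a nice touch that the paper leaves implicit.
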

    \begin{proof}
        Since the optimal control problem has the pathwise-in-probability turnpike property, we know by Definition~\ref{defn:stochTurnpike} that there is an $\alpha_2 \in \K_{\infty}$ such that there are at least $N - (\delta+C_a)/\alpha_2(\eps)$ time instants for which $\Prob\left(\normX{ \XtrajShort{k} - \XstrajShort{k} } \leq \eps \right) \geq 1-\eps$ holds. Hence, it follows that 
        \begin{equation*}
        \begin{split}
            d_{KF}& (\XtrajShort{k},\XstrajShort{k}) \\
            &= \inf_{\epsilon > 0} \left \{ \Prob (\normX{ \XtrajShort{k} - \XstrajShort{k} } > \epsilon) \leq \epsilon \right \} \leq \eps
        \end{split}
        \end{equation*}
        holds for all these time instants, and thus, also
        \begin{equation*}
        \begin{split}
            d_{LP} &(P_{\XtrajShort{k}}, \varrho^s_X) \\
            &= \inf \{ d_{KF} ( X, Y ) : X \sim P_{\XtrajShort{k}}, ~ Y \sim \varrho^s_X \} \leq \eps,
        \end{split}
        \end{equation*}
        which proves the claim with $\alpha_3 = \alpha_2 \in \K_{\infty}$ and $C_c = C_b$.
    \end{proof}

    Similar to stochastic convergence properties, the distributional turnpike property with respect to the Lévy–Prokhorov metric is a relatively weak property since, in general, we can not even conclude statements about the behavior of the moments from it. However, the Wasserstein metric is strong enough to let us make implications about the moments. Unfortunately, from the pathwise turnpike in probability, we cannot conclude the Wasserstein turnpike property, but this conclusion holds for the $L^r$ turnpike property, as the following theorem shows.

    \begin{thm} \label{thm:TurnpikeLpWp}
        Assume the stochastic optimal control problem \eqref{eq:stochOCP} has the $L^r$ turnpike property. Then it has the Wasserstein turnpike property of order $s$ for all $s \leq r$ in the sense of part \ref{defn:WassersteinTurnpike}) of Definition \ref{def:wasserstein_weak}.
    \end{thm}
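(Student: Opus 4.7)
The plan is to reduce the statement to two observations: first, that the pair of random variables $(X(k,\mathbf W),X^s(k,\mathbf W))$ obtained from running both systems with the \emph{same} disturbance realization is a valid coupling of the marginal distributions $P_{X(k)}$ and $\varrho_X^s$, and second, that Wasserstein distances of lower order are bounded by those of higher order via Jensen's inequality.

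More concretely, I would first treat the case $s=r$. Fix $X_0\in\XX$ and let $C_a$, $\alpha_1\in\K_\infty$ be the constants supplied by the $L^r$ turnpike property. Given any $\varepsilon>0$, apply the $L^r$ turnpike at level $\varepsilon^r$: there are at least $N-(\delta+C_a)/\alpha_1(\varepsilon^r)$ time instants $k\in\mathcal N$ for which
\begin{equation*}
\Exp{\normX{X(k,\mathbf W)-X^s(k,\mathbf W)}^r}\le \varepsilon^r.
\end{equation*}
Since $(X(k,\mathbf W),X^s(k,\mathbf W))$ is a particular coupling with marginals $P_{X(k)}$ and $\varrho_X^s$, the infimum defining the Wasserstein distance satisfies
\begin{equation*}
d_{W_r}(P_{X(k)},\varrho_X^s)\le \Exp{\normX{X(k,\mathbf W)-X^s(k,\mathbf W)}^r}^{1/r}\le \varepsilon.
\end{equation*}
Hence the Wasserstein turnpike property of order $r$ holds with $C_c=C_a$ and the $\K_\infty$ function $\alpha_3(\varepsilon):=\alpha_1(\varepsilon^r)$.

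For the case $s<r$, I would invoke the standard monotonicity $d_{W_s}(\mu,\nu)\le d_{W_r}(\mu,\nu)$ on $\PP_r(\X)\subseteq\PP_s(\X)$, which is an immediate consequence of Jensen's inequality applied to the convex map $t\mapsto t^{r/s}$ and any optimal coupling for $W_r$. Combining this with the bound obtained above yields $d_{W_s}(P_{X(k)},\varrho_X^s)\le\varepsilon$ on exactly the same set of time instants, so the claim follows with the same $C_c$ and $\alpha_3$.

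There is no genuine obstacle: the only subtle point worth flagging is that the coupling used to bound the Wasserstein distance is \emph{not} an arbitrary pairing but the specific one produced by running the two processes against the common noise sequence $\mathbf W$ fixed throughout the paper; this is precisely what makes the $L^r$ quantity on the right-hand side of \eqref{eq:DissiIneqLp} an upper bound for the infimum defining $d_{W_r}$. The order-reduction step is then purely measure-theoretic and does not interact with the dynamics.
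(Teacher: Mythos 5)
Your proposal is correct and follows essentially the same route as the paper: bound $d_{W_r}(P_{X(k)},\varrho_X^s)$ from above by the $L^r$ distance of the specific coupling induced by the common noise sequence, then use the monotonicity of Wasserstein distances in the order (the paper cites Villani's Remark~6.6 for this step, which is exactly your Jensen argument). Your bookkeeping $\alpha_3(\eps)=\alpha_1(\eps^r)$ is in fact the cleaner way to track the $\K_\infty$ function here.
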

    \begin{proof}
         Since $r \leq n$, we know that  
         \begin{equation*}
         \begin{split}
             \Vert X - &Y \Vert_{L^r} \geq \\
             &\inf \left\lbrace \left(\Exp{\normX{ \bar{X} - \bar{Y} }^r} \right)^{1/r}: ~ \bar{X} \sim P_X,~\bar{Y} \sim P_Y \right\rbrace \\
             &= d_{W_r}(P_X,P_Y) \geq 0
         \end{split}
         \end{equation*}
         holds for all $X,Y \JS{\in} \Llp[n]{\X}$. Thus, 
         \begin{equation*}
         \begin{split}
            \Vert \XtrajShort{k} - &\XstrajShort{k} \Vert_{L^r}^r = \Exp{\normX{ \XtrajShort{k} - \XstrajShort{k} }^r} \leq \eps
         \end{split} 
         \end{equation*}
         implies that $d_{W_r} ( P_{\XtrajShort{k}} , \varrho^s_X ) \leq \eps^{1/r}$ for all $\eps > 0$, which proves the claim for $s = r$ with $C_c = C_a$ and $\alpha_3(\eps) = \alpha_1(\eps^{1/r}) \in \K_{\infty}$. For $s < r$ the assertion follows directly from $\Vert P_X - P_Y \Vert_{\JS{L^s}} \leq \Vert P_X - P_Y \Vert_{\JS{L^r}}$ for all $X,Y \in \Llp[n]{\X}$, cf.\ \cite[Remark~6.6]{Villani2009_book}.
    \end{proof}

    The next result shows that the Wasserstein turnpike property is indeed strong enough to imply turnpike of different moments.

    \begin{thm} \label{thm:TurnpikeWpMoments}
        \JS{Assume} the stochastic optimal control problem \eqref{eq:stochOCP} has the Wasserstein turnpike property \JS{of order $r$}. Then it has the $s$-th moment turnpike property for all $s \leq r$.
    \end{thm}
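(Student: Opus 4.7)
The plan is to reduce the claim to two elementary inequalities for Wasserstein distances. First I would exploit the monotonicity of Wasserstein distances in the order: for any coupling $(\bar X,\bar Y)$ with $\bar X\sim P_X$, $\bar Y\sim P_Y$, Jensen's (equivalently Lyapunov's) inequality gives $\E[\normX{\bar X-\bar Y}^s]^{1/s}\leq \E[\normX{\bar X-\bar Y}^r]^{1/r}$ for $s\leq r$. Taking the infimum over couplings yields $d_{W_s}(P_X,P_Y)\leq d_{W_r}(P_X,P_Y)$. Consequently, the Wasserstein turnpike of order $r$ immediately upgrades to a Wasserstein turnpike of order $s$ with the same $\alpha_3$ and $C_c$.

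Second, I would invoke the reverse triangle inequality in $L^s$: for any coupling,
\begin{equation*}
    \left| \E[\normX{\bar X}^s]^{1/s} - \E[\normX{\bar Y}^s]^{1/s} \right| = \big| \Vert\bar X\Vert_{L^s}-\Vert\bar Y\Vert_{L^s} \big| \leq \Vert \bar X - \bar Y\Vert_{L^s}.
\end{equation*}
The left-hand side depends only on the marginals, so taking the infimum over couplings produces
\begin{equation*}
    \left| \E[\normX{X}^s]^{1/s} - \E[\normX{Y}^s]^{1/s} \right| \leq d_{W_s}(P_X,P_Y).
\end{equation*}

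Combining the two steps, whenever $d_{W_r}(P_{\Xtraj{k}},\varrho^s_X)\leq \eps$, also $|\E[\normX{\Xtraj{k}}^s]^{1/s}-\E[\normX{\XstrajShort{k}}^s]^{1/s}|\leq \eps$. Hence the set of indices $k\in\mathcal{N}$ counted by $\Mset^s_{\eps}$ contains the set of indices $k$ counted by the Wasserstein-of-order-$r$ turnpike, and the same lower bound $N-(\delta+C_c)/\alpha_3(\eps)$ applies. The claim follows with $\alpha_4:=\alpha_3\in\K_\infty$ and $C_d:=C_c$.

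There is no real obstacle here; both inequalities are standard. The only place where one has to be slightly careful is the monotonicity $d_{W_s}\leq d_{W_r}$, which requires working with an optimal (or near-optimal) coupling and applying Jensen's inequality before passing to the infimum; but this is a textbook fact, cf.\ \cite{Villani2009_book}. The rest of the argument is a direct bookkeeping step transferring the set-cardinality lower bound from the Wasserstein turnpike to the moment turnpike.
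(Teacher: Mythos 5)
Your proposal is correct and follows essentially the same route as the paper's proof: the paper also combines the monotonicity $d_{W_s}\leq d_{W_r}$ with a reverse-triangle-inequality argument, phrased there as $\vert d_{W_s}(P_X,\delta_0)-d_{W_s}(P_Y,\delta_0)\vert\leq d_{W_s}(P_X,P_Y)$ together with the identity $d_{W_s}(P_X,\delta_0)=\Exp{\normX{X}^s}^{1/s}$, which is the same bound you obtain by taking the infimum over couplings of the $L^s$ reverse triangle inequality. The bookkeeping conclusion with $\alpha_4=\alpha_3$ and $C_d=C_c$ matches the paper exactly.
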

    \begin{proof}
        Because $d_{W_s}(\cdot,\cdot)$ defines a metric on $\PP_n(\X)$, we get by the triangle inequality
        \begin{equation*}
          d_{W_s} (P_X , 0 )\leq d_{W_s} ( P_X , P_Y ) + d_{W_s} (P_Y , 0 )
        \end{equation*}
        and 
        \begin{equation*}
          d_{W_s} ( P_Y , 0 ) \leq d_{W_s} (P_X , P_Y ) + d_{W_s}  (P_X , 0 )
        \end{equation*}
        which implies 
        \begin{equation*} 
        \begin{split}
            \vert d_{W_s} (P_X , 0 ) - &d_{W_s} (P_Y , 0 ) \vert \\
            &\leq d_{W_s} (P_X, P_Y ) \leq d_{W_r} ( P_X , P_Y )
        \end{split}
        \end{equation*}
        for all $X,Y \in \Llp[n]{\X}$ and $s \leq r \leq n$. Further, we have 
        \begin{equation*}
        \begin{split}
            d_{W_s} (P_X , 0 ) &= \inf \left\lbrace \left(\Exp{\normX{ \bar{X} - 0 }^s} \right)^{1/s}: ~ \bar{X} \sim P_X \right\rbrace \\
            &=  \Exp{\normX{ X }^s}^{1/s}
        \end{split}
        \end{equation*}
        and, thus, $d_{W_r} ( P_{\XtrajShort{k}} , \varrho_X^s) \leq \eps$ implies $\vert \Exp{\normX{ \XtrajShort{k} }^s}^{1/s} - \Exp{\normX{ \XstrajShort{k} }^s}^{1/s} \vert \leq \eps$ which proves the theorem with $C_d = C_c$ and $\alpha_4 = \alpha_3 \in \K_{\infty}$. 
    \end{proof}

    \subsection{Optimal stationary solutions}
    One of the key results in deterministic turnpike theory is that the steady state at which the turnpike property occurs---sometimes also referred to as {\em the} turnpike---is an optimal steady state. The generalization of this characterization to our setting is done in the following theorem.
    \JS{Note that} it is sufficient to prove this statement for the stationary distribution in the sense of Definition~\ref{defn:stationaryDistr}, as \JS{equation \eqref{eq:equiDistrStat}} then implies that it also holds for Definition~\ref{defn:stationaryProcess}.

    \begin{thm} \label{thm:stationaryOpti}
        Let the stochastic optimal control problem \eqref{eq:stochOCP} be distributionally dissipative at $(\varrho_X^s,\pi^s)$ with respect to a metric $d_D$. Then the stationary distribution $\varrho_X^s$ and the stationary feedback $\pi^s$ are an optimal solution to 
        \begin{equation} \label{eq:stationaryOpti}
        \begin{split}
            &\min_{\pi,\varrho_X} \ell(X,U) \\
            s.t.& ~ X \sim \varrho_X, ~ U = \pi(X), ~ \\
            &\varrho_X = \mathcal{T}(\varrho_X,\pi), ~ (X,\pi(X)) \in \Y.
        \end{split}
        \end{equation}
        Moreover, if the problem is strictly dissipative in distribution, then the stationary distribution $\varrho_X^s \in \Gamma(\X)$ is the unique (partial) solution of this problem, i.e., for every other solution $(\tilde{\varrho}_X^s, \tilde{\pi}^s) \in \Gamma(\X) \times \Pi_0(\X,\U)$ we get $d_D( \varrho_X^s, \tilde{\varrho}_X^s) = 0$.
    \end{thm}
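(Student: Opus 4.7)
The plan is to evaluate the distributional dissipation inequality \eqref{eq:DissiIneqDistr} at an arbitrary competing feasible pair for the stationary optimization problem \eqref{eq:stationaryOpti} and exploit the fact that at a stationary distribution the storage-function difference telescopes to zero in a single step. This turns the dissipation inequality into a direct pointwise comparison of the stage costs at the two stationary pairs.

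Concretely, first I would check that the advertised $(\varrho_X^s,\pi^s)$ is itself feasible for \eqref{eq:stationaryOpti}: this follows immediately from Definition~\ref{defn:stationaryDistr} together with the standing assumption that $\varrho_X^s\in\Gamma(\X)$ so that some $X^s\sim\varrho_X^s$ exists in $\Llp[n]{\X}$, and the admissibility of $(\varrho_X^s,\pi^s)$ built into the setup. Next, pick any competitor $(\tilde\varrho_X,\tilde\pi)$ satisfying the constraints of \eqref{eq:stationaryOpti}, and let $\tilde X\sim\tilde\varrho_X$. Setting $\tilde X^+:=f(\tilde X,\tilde\pi(\tilde X),W)$ with $W\sim\varrho_W$ independent of $\tilde X$, the stationarity relation $\tilde\varrho_X=\mathcal{T}(\tilde\varrho_X,\tilde\pi)$ yields $P_{\tilde X^+}=\tilde\varrho_X=P_{\tilde X}$, so $\Lambda(P_{\tilde X})-\Lambda(P_{\tilde X^+})=0$. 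Applying \eqref{eq:DissiIneqDistr} at this pair therefore gives
\begin{equation*}
\ell(\tilde X,\tilde\pi(\tilde X))-\ell(X^s,\pi^s(X^s))\;\geq\;\alpha\bigl(d_D(\tilde\varrho_X,\varrho_X^s)\bigr)\;\geq\;0.
\end{equation*}

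In the non-strict case ($\alpha\equiv 0$) this inequality already says $\ell(\tilde X,\tilde\pi(\tilde X))\geq\ell(X^s,\pi^s(X^s))$ for every feasible competitor, so $(\varrho_X^s,\pi^s)$ is an optimal solution of \eqref{eq:stationaryOpti}. For the uniqueness statement, assume strict distributional dissipativity and let $(\tilde\varrho_X^s,\tilde\pi^s)\in\Gamma(\X)\times\Pi_0(\X,\U)$ be any other optimizer. Then $\ell(\tilde X,\tilde\pi^s(\tilde X))=\ell(X^s,\pi^s(X^s))$, so the previous inequality collapses to $\alpha(d_D(\tilde\varrho_X^s,\varrho_X^s))\leq 0$. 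Since $\alpha\in\K_\infty$ is strictly increasing with $\alpha(0)=0$, this forces $d_D(\tilde\varrho_X^s,\varrho_X^s)=0$, which is exactly the uniqueness claim.

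The only delicate point, and the one I would treat with care in the write-up, is the clean cancellation of the storage terms. It hinges on two facts: that $\Lambda$ is defined on $\mathcal{P}(\X)\supseteq\mathcal{P}_n(\X)\supseteq\Gamma(\X)$ so that $\Lambda(\tilde\varrho_X)$ is well defined, and that $P_{\tilde X^+}$ does not depend on the particular representative $W$ used (this is precisely the law-invariant property noted after \eqref{eq:kernel}), so $P_{\tilde X^+}=\mathcal{T}(\tilde\varrho_X,\tilde\pi)$. The boundedness from below of $\Lambda$ plays no role here since the difference vanishes in a single step; it would only be needed in multi-step telescoping arguments. No additional regularity on $\ell$, $f$, or the metric $d_D$ is required beyond what is already assumed in Definition~\ref{defn:DissiDistr}.
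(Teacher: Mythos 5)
Your proposal is correct and uses essentially the same argument as the paper: the key step in both is that the stationarity constraint $\varrho_X=\mathcal{T}(\varrho_X,\pi)$ makes the storage difference $\Lambda(P_{\tilde X})-\Lambda(P_{\tilde X^+})$ vanish, reducing the dissipation inequality \eqref{eq:DissiIneqDistr} to a direct comparison of stage costs, with the $\K_\infty$ property of $\alpha$ giving uniqueness in the strict case. The paper phrases it as a proof by contradiction while you argue directly, but this is only a cosmetic difference.
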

    \begin{proof}
        We prove the claim by contradiction. Assume distributional dissipativity and that there is a solution $(\tilde{\varrho}_X^s, \tilde{\pi}^s) \in \Gamma(\X) \times \Pi_0(\X,\U)$ to problem \eqref{eq:stationaryOpti} such that 
        $\ell(\tilde{X}^s,\tilde{\pi}^s(\tilde{X}^s)) < \ell(X^s,\pi^s(X^s))$
        for $\tilde{X}^s \sim \tilde{\varrho}_X^s$, $X^s \sim \varrho_X^s$. Then we get
        \begin{equation*}
        \begin{split}
            &\ell(\tilde{X}^s,\tilde{\pi}^s(\tilde{X}^s)) - \ell(X^s,\pi^s(X^s)) + \Lambda(P_{\tilde{X}^s}) - \Lambda(P_{(\tilde{X}^s)^+}) \\ 
            &\quad = \ell(\tilde{X}^s,\tilde{\pi}^s(\tilde{X}^s)) - \ell(X^s,\pi^s(X^s)) < 0,
        \end{split}
        \end{equation*}
        which contradicts \eqref{eq:DissiIneqDistr} with $\alpha \equiv 0$ and, thus, the dissipativity of the optimal control problem. Here, we used that $P_{\tilde{X}^s} = P_{(\tilde{X}^s)^+}$ for $\tilde{X}^s \sim \tilde{\varrho}_X^s$ due to the stationarity condition. 
        
        Now consider that the problem is strictly distributionally dissipative and $\ell(\tilde{X}^s,\tilde{\pi}^s(\tilde{X}^s)) = \ell(X^s,\pi^s(X^s))$, but $d_D(\varrho_X^s, \tilde{\varrho}_X^s) > 0$. Then we get by the definition of strict distributional dissipativity that there is a function $\alpha \in \K_{\infty}$ such that
        \begin{equation*}
        \begin{split}
            0 = &\ell(\tilde{X}^s,\tilde{\pi}^s(\tilde{X}^s)) - \ell(X^s,\pi^s(X^s)) \\
            &+ \Lambda(P_{\tilde{X}^s}) - \Lambda(P_{(\tilde{X}^s)^+}) \geq \alpha(d_D( \varrho_X^s , \tilde{\varrho}_X^s)),
        \end{split}
        \end{equation*}
        which is again a contradiction since $\alpha(d_D(\varrho_X^s, \tilde{\varrho}_X^s)) > 0$ for $d_D(\varrho_X^s, \tilde{\varrho}_X^s) > 0$.
    \end{proof}

    Theorem~\ref{thm:stationaryOpti} shows how we can uniquely characterize an optimal stationary distribution similar to the \JS{characterization of an optimal equilibrium in the} deterministic setting.
    \JS{This is important since it shows that the long-time behavior of the solutions of problem \eqref{eq:stochOCP} is determined not by an arbitrary stationary solution but rather by the {\em optimal} stationary solution. Moreover, the optimality of this stationary solution can be used as a tool to prove that stochastic MPC schemes approximate optimal solutions of infinite horizon stochastic optimal control problems, cf.\ \cite{PerformanceCDC}.} \JS{Theorem \ref{thm:stationaryOpti} also characterizes an optimal $\pi$ for the infinite-horizon optimal control problem. However, in general this feedback is only characterized for the stationary process and not for all optimal solutions, because Remark \ref{rem:opt_pi} holds accordingly for the infinite-horizon problem.}
    \JS{
    \begin{rem}[\JS{Stationarity and the pathwise perspective}]
        Although Theorem~\ref{thm:stationaryOpti} gives us a useful characterization of stationary distributions via an optimality criterion, the problem~\eqref{eq:stationaryOpti} is in general infinite dimensional and thus hard to solve. However, this, in turn, shows us an advantage of the pathwise view: Unlike the other turnpike properties, the pathwise-in-probability turnpike can be observed numerically without the necessity to compute of the stationary process or its distribution. This is possible because we can simulate the realizations of the solutions to \eqref{eq:stochOCP} for a fixed realization of the disturbance $\mathbf{W}$ and check by means of these simulations whether the realizations are close to each other most of the time for sufficiently large optimization horizons $N$. \JS{Notice that path-wise results correspond nicely \JS{to} applications, as real-world measurements are \JS{always taken from} realization paths.}
    \end{rem}
    }
    Although Theorem~\ref{thm:stationaryOpti} considers strict distributional dissipativity, the same statement also holds in the case of strict $L^r$ dissipativity. We could show this by slightly modifying the proof of Theorem~\ref{thm:stationaryOpti} as in \cite[Theorem~5.3]{CDCExtension}, which proves the statement in a linear-quadratic setting. 
    However, we will choose another direction and conclude this fact from more general results on the relation of the two different (strict) dissipativity notions in random variables and in distribution (cf.\ Definitions \ref{defn:DissiDistr} and \ref{defn:DissiLp}). 
    These results provide an interesting contribution in their own right.

    \subsection{Relation between (strict) dissipativity concepts}
    We start this section by proving that strict $L^r$ dissipativity implies strict distributional dissipativity. To this end, we use the following proposition, a variant of \cite[Proposition 3.3]{Gruene2016}  based on \cite[Theorem 1]{willems1972a} in our stochastic setting.

    \begin{prop}[Available storage in distribution] \label{prop:availableStorage}
        Let $\pi^s$ be a stationary feedback of system \eqref{eq:stochSys} with stationary distribution $\varrho_X^s$ such that $(X^s,\pi^s(X^s)) \in \Y$ for $X^s \sim \varrho^s_X$. Then there exists $\alpha \in \K_{\infty}$ (or $\alpha \equiv 0$, respectively) with
        \begin{equation} \label{eq:availableStorage}
        \begin{split}
            &\Lambda(P_{X_0}) := \sup_{\underset{\boldsymbol{\pi} \in \St^N(P_{X_0})}{N \in \N_0}} \sum_{k=0}^{N-1} - \Big( \hat{\ell}(P_{X(k)}, \pi_k) - \hat{\ell}(\varrho^s_X, \pi^s) \\
            &\qquad \qquad \qquad \qquad \qquad - \alpha ( d_D( P_{X(k)}, \varrho_X^s) ) \Big) < \infty
        \end{split}
        \end{equation}
        for all $P_{X_0} \in P_\XX$ if and only if the stochastic optimal control problem \eqref{eq:stochOCP} is strictly distributionally dissipative (or distributionally dissipative, respectively) with respect to the metric $d_D$.
        In this case, inequality \eqref{eq:DissiIneqLp} holds with $\Lambda$ and $\alpha$ from \eqref{eq:availableStorage} and $\Lambda$ is called the {\em available storage}.
    \end{prop}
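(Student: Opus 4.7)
My plan is to prove this stochastic measure-valued analog of Willems' available-storage theorem by the classical two-step argument: the forward direction follows from telescoping the distributional dissipation inequality \eqref{eq:DissiIneqDistr}, while the converse uses a Bellman-type concatenation of Markov policies in $\St^N(\cdot)$. Throughout, I would work in the probability-measure reformulation \eqref{eq:stochOCPmeasure}, where the law-invariance of $\ell$ and $\Y$ makes all quantities functions of distributions only and the Markov transition operator $\mathcal{T}$ from \eqref{eq:kernel} advances distributions one step at a time.

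For the forward direction (distributional dissipativity implies finite $\Lambda$), I would suppose that \eqref{eq:DissiIneqDistr} holds with some storage $\tilde\Lambda$ bounded below by $M \in \R$. Rewriting the inequality as $\hat\ell(P_X,\pi)-\hat\ell(\varrho^s_X,\pi^s)-\alpha(d_D(P_X,\varrho^s_X)) \geq \tilde\Lambda(P_{X^+}) - \tilde\Lambda(P_X)$ and summing along any admissible distributional trajectory generated by $\boldsymbol{\pi} \in \St^N(P_{X_0})$ gives the telescoping lower bound $\tilde\Lambda(P_{X(N)}) - \tilde\Lambda(P_{X_0}) \geq M - \tilde\Lambda(P_{X_0})$. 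Multiplying by $-1$ and taking the supremum over $N$ and $\boldsymbol{\pi}$ yields $\Lambda(P_{X_0}) \leq \tilde\Lambda(P_{X_0}) - M < \infty$. The non-strict case follows identically with $\alpha \equiv 0$.

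For the converse, assume $\Lambda$ defined in \eqref{eq:availableStorage} is finite for every $P_{X_0} \in P_{\XX}$. Taking the empty sum ($N=0$) immediately gives $\Lambda \geq 0$, so $\Lambda$ is bounded from below. To establish \eqref{eq:DissiIneqDistr} with $\Lambda$ as storage, I would fix any admissible $(X,\pi(X)) \in \Y$ with $X \sim P_X$, set $P_{X^+} = \mathcal{T}(P_X,\pi)$, and argue as follows: for any continuation policy $\boldsymbol{\pi}' \in \St^N(P_{X^+})$ the concatenation $(\pi,\pi'_0,\ldots,\pi'_{N-1})$ belongs to $\St^{N+1}(P_X)$ and generates the distributional trajectory $P_X,P_{X^+},P_{X'(1)},\ldots,P_{X'(N)}$. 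Applying the supremum in \eqref{eq:availableStorage} to this concatenated policy and isolating the $k=0$ term yields
\begin{align*}
\Lambda(P_X) &\geq -\bigl(\hat\ell(P_X,\pi)-\hat\ell(\varrho^s_X,\pi^s)-\alpha(d_D(P_X,\varrho^s_X))\bigr) \\
&\quad + \sum_{k=0}^{N-1} -\bigl(\hat\ell(P_{X'(k)},\pi'_k)-\hat\ell(\varrho^s_X,\pi^s)-\alpha(d_D(P_{X'(k)},\varrho^s_X))\bigr).
\end{align*}
Since the first summand is independent of $(N,\boldsymbol{\pi}')$, taking the supremum of the right-hand side over all continuations starting from $P_{X^+}$ yields $\Lambda(P_X) \geq -(\hat\ell(P_X,\pi)-\hat\ell(\varrho^s_X,\pi^s)-\alpha(d_D(P_X,\varrho^s_X))) + \Lambda(P_{X^+})$, which rearranges exactly to \eqref{eq:DissiIneqDistr}.

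The main technical subtlety is verifying that the concatenated policy lies in $\St^{N+1}(P_X)$; this follows because the Markov property together with the law-invariance of $\Y$ ensures that admissibility of the resulting control process depends only on the distributions visited along the trajectory, and the distribution $P_{X^+}$ is exactly the starting distribution of the admissible continuation $\boldsymbol{\pi}'$. Beyond this point, no further measurability or integrability difficulties arise, because the law-invariance of $\ell$ makes every cost evaluation a function of distributions alone, so both telescoping and concatenation are manipulations of real numbers on the measure space.
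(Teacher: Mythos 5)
Your proof is correct, but it takes a different (more self-contained) route than the paper. The paper's proof is essentially a one-line reduction: it views the Markov decision problem \eqref{eq:stochOCPmeasure} as a deterministic optimal control problem on the state space $\PP_n(\X)$ with dynamics given by the transition operator $\mathcal{T}$, and then invokes \cite[Proposition~3.3]{Gruene2016} (itself a descendant of \cite[Theorem~1]{willems1972a}) verbatim, using that law-invariance of $\ell$ and $\Y$ makes distributional dissipativity of \eqref{eq:stochOCPmeasure} equivalent to that of \eqref{eq:stochOCP}. You instead re-derive the content of that cited result from scratch: telescoping the dissipation inequality for the forward implication, and the $N=0$ lower bound plus policy concatenation (a dynamic-programming principle for the available storage) for the converse. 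Both arguments rest on the same structural fact, which you correctly identify, namely that every quantity involved is a function of the visited distributions only, so the classical Willems machinery transfers without measure-theoretic complications. What your version buys is transparency about where admissibility enters --- in particular the observation that concatenating $\pi$ with any $\boldsymbol{\pi}'\in\St^N(P_{X^+})$ yields an element of $\St^{N+1}(P_X)$, which the paper leaves implicit inside the citation; what the paper's version buys is brevity and an explicit link to the deterministic theory. One cosmetic slip: the final sentence of the proposition refers to inequality \eqref{eq:DissiIneqDistr} (the distributional dissipation inequality), and your concluding rearrangement correctly lands there, so no gap arises.
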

    \begin{proof}
        Consider the Markov decision process \eqref{eq:stochOCPmeasure}. Then the statement follows directly by \cite[Proposition 3.3]{Gruene2016}, which proves the claim since distributional dissipativity of the probability \JS{measure-based} problem \eqref{eq:stochOCPmeasure} and the stochastic optimal control problem \eqref{eq:stochOCP} are equivalent.
    \end{proof}

    Now, we can prove that strict $L^r$ dissipativity implies strict distributional dissipativity by showing that the function $\Lambda$ from \eqref{eq:availableStorage} is bounded.

    \begin{thm} \label{thm:DissiLrToDissiDistr}
        Assume the stochastic optimal control problem \eqref{eq:stochOCP} is strictly $L^r$ dissipative. Then it is also strictly weakly distributionally dissipative and strictly Wasserstein dissipative of order $s$ for all $s \leq r$.
    \end{thm}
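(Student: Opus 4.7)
The approach is to apply Proposition~\ref{prop:availableStorage}, which reduces the task to showing that the available storage $\Lambda$ associated with each desired distributional dissipation inequality is finite on $P_{\XX}$. My plan is to bound $\Lambda(P_{X_0})$ from above using the $L^r$ storage $\lambda$, after translating the $L^r$ supply rate into a supply rate formulated in the distributional metrics $d_{W_r}$ and $d_{LP}$.

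First, for any admissible $\mathbf{U} \in \FF^N(X_0)$ generated by a Markov sequence $\boldsymbol{\pi}$, I telescope the strict $L^r$ dissipation inequality of Definition~\ref{defn:DissiLp} and use the uniform lower bound $M$ on $\lambda$. After rearrangement this yields
\begin{equation*}
\sum_{k=0}^{N-1}\left[-\ell(X(k),U(k)) + \ell(\mathbf{X}^s,\mathbf{U}^s) + \alpha\!\left(\|X(k)-X^s(k)\|_{L^r}^r\right)\right] \leq \lambda(0,X_0) - M,
\end{equation*}
and the right-hand side is finite and independent of $N$ and $\boldsymbol{\pi}$; any representative $X_0 \sim P_{X_0}$ suffices because $\lambda$ is real-valued on $\Llp[n]{\X}$.

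Next, I replace the $L^r$ term by a distributional lower bound. Since $X^s(k) \sim \varrho_X^s$, the pair $(X(k),X^s(k))$ is an admissible coupling, giving $d_{W_r}(P_{X(k)},\varrho_X^s) \leq \|X(k)-X^s(k)\|_{L^r}$, so that $\alpha(\|X(k)-X^s(k)\|_{L^r}^r) \geq \tilde{\alpha}_W(d_{W_r}(P_{X(k)},\varrho_X^s))$ with $\tilde{\alpha}_W(t):=\alpha(t^r)\in\K_\infty$. For the Lévy--Prokhorov case, Markov's inequality with the threshold $\eps=\|X(k)-X^s(k)\|_{L^r}^{r/(r+1)}$ yields $d_{LP}(P_{X(k)},\varrho_X^s) \leq d_{KF}(X(k),X^s(k)) \leq \|X(k)-X^s(k)\|_{L^r}^{r/(r+1)}$, and hence the analogous lower bound with $\tilde{\alpha}_{LP}(t):=\alpha(t^{r+1})\in\K_\infty$. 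Substituting either bound into the telescoped inequality, taking the supremum over $N$ and $\boldsymbol{\pi}$, and invoking Proposition~\ref{prop:availableStorage} establishes strict Wasserstein dissipativity of order $r$ and strict weak distributional dissipativity. The remaining Wasserstein orders $s<r$ follow from the monotonicity $d_{W_s}\leq d_{W_r}$ (Jensen's inequality), with $\tilde{\alpha}_W$ left unchanged.

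The main obstacle is the translation from the pathwise coupling bound to a genuinely distributional rate: for $d_{W_r}$ it is automatic via the infimum in the Wasserstein definition because $X^s(k)$ provides a ready-made coupling on the same probability space, but for $d_{LP}$ the Markov-inequality trick together with the rescaling $\tilde{\alpha}_{LP}(t)=\alpha(t^{r+1})$ is needed to stay within $\K_\infty$.
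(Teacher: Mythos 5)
Your proof is correct and follows essentially the same route as the paper: bound the available storage of Proposition~\ref{prop:availableStorage} by telescoping the $L^r$ dissipation inequality and comparing the $L^r$ distance to $d_{W_r}$ via the canonical coupling and to $d_{LP}$ via $d_{KF}$. You are in fact slightly more careful than the paper on the Lévy--Prokhorov case, where the Markov-inequality bound $d_{KF}\leq \Vert X-X^s\Vert_{L^r}^{r/(r+1)}$ forces the rescaled comparison function $\tilde{\alpha}_{LP}(t)=\alpha(t^{r+1})$ rather than the unmodified one the paper implicitly uses.
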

    \begin{proof}
        Let $d_D(\cdot,\cdot) = d_{W_s}(\cdot,\cdot)$.
        From Definition~\ref{defn:DissiLp} we know that there is a function $\hat{\alpha} \in \K_{\infty}$ such that
        \begin{equation*}
        \begin{split}
            &\ell(\XtrajShort{k},U(k)) - \ell(\mathbf{X}^s,\mathbf{U}^s) \\
            &\qquad \quad + \lambda(k,\XtrajShort{k}) - \lambda(k+1,\XtrajShort{k+1}) \\ 
            &\geq \hat{\alpha}(\Vert \XtrajShort{k} - \XstrajShort{k} \Vert_{L^r}^r) =  \alpha(\Vert \XtrajShort{k} - \XstrajShort{k} \Vert_{L^r})
        \end{split}
        \end{equation*}
        for $\alpha(z) = \hat{\alpha}(z^r) \in \K_{\infty}$. 
        Now let $M$ be a uniform lower bound on $\lambda$.
        Then for $(\varrho_X^s,\pi^s) \sim (\mathbf{X}^s, \mathbf{U}^s)$ we get
        \begin{equation} \label{eq:inequalityStorage}
            \begin{split}
                \sup_{\underset{\boldsymbol{\pi} \in \St^N(P_{X_0})}{N \in \N_0}} \sum_{k=0}^{N-1} - \Big( \hat{\ell}(P_{\XtrajShort{k}}, \pi_k) &- \hat{\ell}(\varrho^s_X, \pi^s) \\
                - \alpha ( &d_D(P_{\XtrajShort{k}}, \varrho_X^s) ) \Big) \\
                = \sup_{\underset{\mathbf{U} \in \FF^N(X_0)}{N \in \N_0}} \sum_{k=0}^{N-1} - \Big( \ell(\XtrajShort{k}), U(k)) &- \ell(\mathbf{X}^s, \mathbf{U}^s) \\
                 - \alpha ( d_D(P_{\XtrajShort{k}}, &P_{\XstrajShort{k}})) \Big) \\ 
                 \leq \sup_{\underset{\mathbf{U} \in \FF^N(X_0)}{N \in \N_0}} \sum_{k=0}^{N-1} - \Big( \ell(\XtrajShort{k}, U(k)) &- \ell(\mathbf{X}^s, \mathbf{U}^s) \\
                - \alpha ( \Vert \XtrajShort{k}& - \XstrajShort{k} \Vert_{L^r} ) \Big) \\
                \leq \sup_{N \in \N_0, \mathbf{U} \in \FF^N(X_0)} \sum_{k=0}^{N-1} \Big( \lambda(k,\XtrajShort{k}) &\\
                - \lambda(k+1,&\XtrajShort{k+1}) \Big) \\
                \leq \lambda(0,X_0) - M < \infty \qquad \qquad &
            \end{split}
        \end{equation}
        for all $P_{X_0} \in P_{\XX}$ and noise sequences $\mathbf{W}$. Thus, the assertion for strict Wasserstein dissipativity follows by Proposition~\ref{prop:availableStorage}. The assertion for strict weak distributional dissipativity follows analogously, since \JS{the} inequality above also holds for $d_D(\cdot,\cdot) = d_{LP}(\cdot,\cdot)$.
    \end{proof}

    It is easy to see that Theorem~\ref{thm:DissiLrToDissiDistr} also holds for non-strict (meaning ``not necessarily strict'') dissipativity. Indeed, for dissipativity the converse is is also true, as the following lemma shows.

    \begin{lem} \label{lem:DissiLrDissiDistr}
        The stochastic optimal control problem \eqref{eq:stochOCP} is $L^r$ dissipative \JS{for all $1 \leq r \leq n$} if and only if it is distributionally dissipative. 
    \end{lem}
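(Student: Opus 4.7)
The plan is to prove both directions of the equivalence separately, using the non-strict form ($\alpha \equiv 0$) of the two dissipation inequalities.

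\textbf{Forward direction.} For the implication ``$L^r$ dissipative for all $1 \leq r \leq n$ $\Rightarrow$ distributionally dissipative'' I would reuse Theorem~\ref{thm:DissiLrToDissiDistr}: inspection of the chain of estimates in \eqref{eq:inequalityStorage} shows that the upper bound $\lambda(0,X_0) - M < \infty$ remains valid verbatim when $\alpha$ is taken identically zero, and Proposition~\ref{prop:availableStorage} in its non-strict form then yields distributional dissipativity.

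\textbf{Reverse direction.} For the implication ``distributionally dissipative $\Rightarrow$ $L^r$ dissipative'', the plan is to construct a random-variable storage function directly from the distributional one. Let $\Lambda : \PP(\X) \to \R$ be the storage function from Definition~\ref{defn:DissiDistr} with $\alpha \equiv 0$, and set $\lambda(k,X) := \Lambda(P_X)$ for all $k \in \N_0$ and all $X \in \Llp[n]{\X}$. This $\lambda$ is well-defined because every $X \in \Llp[n]{\X}$ induces $P_X \in \PP_n(\X)$, and it is bounded from below since $\Lambda$ is. For any $X_0 \in \XX$ and any admissible Markov control sequence $\mathbf{U} \in \FF^{\infty}(X_0)$ with $U(k) = \pi_k(X(k))$, law-invariance of $\ell$ gives $\ell(X(k),U(k)) = \hat{\ell}(P_{X(k)},\pi_k)$, and \eqref{eq:kernel} gives $P_{X(k+1)} = \mathcal{T}(P_{X(k)},\pi_k)$. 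Substituting these two identities into the distributional dissipation inequality \eqref{eq:DissiIneqDistr} with $\alpha \equiv 0$ immediately produces the $L^r$ dissipation inequality \eqref{eq:DissiIneqLp} with $\alpha \equiv 0$. Because that inequality does not involve any $L^r$ norm on its right-hand side, it holds simultaneously for all $1 \leq r \leq n$.

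I expect no serious obstacle: non-strict dissipativity is essentially a property at the distributional level, governed only by $\hat{\ell}$ and the transition operator $\mathcal{T}$, and any such distributional inequality can be lifted to a random-variable inequality through the map $X \mapsto P_X$ thanks to the law-invariance of the stage cost. The only point deserving a brief comment is that the storage function $\lambda(k,X) = \Lambda(P_X)$ constructed in the reverse direction is independent of $k$; this is harmless, as Definition~\ref{defn:DissiLp} permits time-independent storage functions as a special case.
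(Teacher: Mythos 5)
Your proposal is correct and follows essentially the same route as the paper's own proof: the forward direction reuses the bound \eqref{eq:inequalityStorage} with $\alpha \equiv 0$ together with Proposition~\ref{prop:availableStorage}, and the reverse direction defines $\lambda(k,X) := \Lambda(P_X)$ and transfers the distributional inequality to random variables via law-invariance, exactly as in the paper. The closing observation that the non-strict inequality is independent of $r$ also mirrors the paper's reasoning, so no further comment is needed.
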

    \begin{proof}
        ``$\Rightarrow$'' Assume that the stochastic optimal control problem \eqref{eq:stochOCP} is strict $L^r$ dissipative. Then the inequality \eqref{eq:inequalityStorage} also holds with $\alpha \equiv 0$ which implies distributional dissiptivity by Proposition~\ref{prop:availableStorage}. \\
        ``$\Leftarrow$'' Assume that the stochastic optimal control problem is distributionally dissipative, i.e.,
        \begin{equation}
            \begin{split} \label{eq:DissiIneqDistr2}
                0 \leq& \ell(X,\pi(X)) - \ell(X^s,\pi^s(X^s)) + \Lambda(P_X) - \Lambda(P_{X^+})
            \end{split}
        \end{equation}
        holds for all for all $(X,\pi(X)) \in \Y$ and $X^s \sim \varrho^s_X$.
        Further, we know that $P_{\XtrajShort{k}} = P_{X(k)}$ for all noise sequences $\mathbf{W}$ and for $(\mathbf{X}^s, \mathbf{U}^s) \sim (\varrho_X^s,\pi^s)$ the identity $\ell(X^s,\pi^s(X^s)) = \ell(\mathbf{X}^s, \mathbf{U}^s)$ holds for $X^s \sim \varrho^s_X$.
        Thus, inequality \eqref{eq:DissiIneqDistr2} directly implies 
        \begin{equation*}
            \begin{split}
                0 \leq \; & \ell(\XtrajShort{k},U(k)) - \ell(\mathbf{X}^s, \mathbf{U}^s) \\
                &+ \lambda(k,\XtrajShort{k}) - \lambda(k+1,\XtrajShort{k+1}) 
            \end{split}
        \end{equation*}
        with $\lambda(k,\XtrajShort{k}) = \Lambda(P_{\XtrajShort{k}})$ for all $k \in \N_0$ and $(\XtrajShort{k}),U(k)) \in \Y$ with $U(k) \in \Llp[l]{\U}$ and $U(k) = \pi(X(k))$ for some $\pi \in \Pi_0(\X,\U)$, which implies $L^r$ dissipativity.
    \end{proof}
    
    The reason why the equivalence of the two dissipativity concepts in the sense of Lemma~\ref{lem:DissiLrDissiDistr} holds is that for dissipativity the metric used to measure the distance between the random variables has no impact on the dissipation inequalities \eqref{eq:DissiIneqDistr} and \eqref{eq:DissiIneqLp} since $\alpha \equiv 0$. 
    However, for \emph{strict} dissipativity the inequalities depend on these metrics and since we cannot bound the $L^r$-norm from above by a metric on the space of probability measures, Lemma~\ref{lem:DissiLrDissiDistr} does not hold for strict dissipativity. 
    \JS{This is illustrated by the following example.
    \begin{bsp} \label{expl:counterexample}
        Let $d_D$ be an arbitrary metric on $\mathcal{P}_n(\mathcal{X})$ and consider the optimal control problem
        \begin{equation} \label{eq:counterexample}
            \begin{split}
                &\min_{\mathbf{U}} J_N(X_0,\mathbf{U}) = \sum_{k=0}^{N-1} d_D(P_{X(k)},\varrho^*)  \\
                &\quad s.t. ~  X(k+1) = X(k) + U(k)W(k), ~ X(0) = X_0 \\
                & \qquad  X(k) \in \Llp[n]{\R}, ~ U(k) \in \Lp[n]{\R} \\ 
            \end{split}
        \end{equation}
        with $\varrho^* \sim \Normal(0,\Sigma)$, $\Sigma \in \R$. It is easy to see that the problem \eqref{eq:counterexample} is strictly distributional dissipative at $(\varrho^*,0)$ with respect to the metric $d_D$ and storage function $\Lambda \equiv 0$. Moreover, any pair $(\mathbf{X}^s,\mathbf{U}^s)$ with $\mathbf{X}^s(k) \sim \Normal(0,\Sigma)$ and $\mathbf{U}^s \equiv 0$ is stationary in the sense of Definition~\ref{defn:stationaryProcess}, and thus, in particular for every such stationary pair $(\mathbf{X}^s,\mathbf{U}^s)$ the pair $(-\mathbf{X}^s,\mathbf{U}^s)$ is also stationary since $\varrho^* \sim \Normal(0,\Sigma)$ is a symmetric distribution around zero. Hence, following the arguments from \cite[Lemma~5.4]{CDCExtension}, problem \eqref{eq:counterexample} cannot be strictly dissipative in $L^r$ for all $r \leq n$ since $\Vert X^s(k) - (-X^s(k)) \Vert_{L^r} \nrightarrow 0$ as $k \rightarrow \infty$.
    \end{bsp}}
    
    \JS{
    Note that the implications from Example~\ref{expl:counterexample} also hold if we replace the metric $d_D$ by a dissimilarity measure in the sense of \cite{Gros2022}. This shows that also for dissimilarity measures Lemma~\ref{lem:DissiLrDissiDistr} does not hold in the strict case.
    }

    \subsection{Implications of strict distributional dissipativity}
    Now that we know how to switch between the different settings, we conclude the presentation of our main results by the following theorem, which shows that strict distributional dissipativity implies distributional turnpike. Again, the proof uses the same arguments as in deterministic settings. 
    \JS{Note that the rotated costs used in the proof of Theorem~\ref{thm:DissiTurnpikeDistr} are constructed in the same way as in \eqref{eq:rot_stagecosts} using the storage function $\Lambda$ from the distributional dissipativity inequality \eqref{eq:DissiIneqDistr} instead of the $L^r$-storage function $\lambda$ from Definition~\ref{defn:DissiLp}.}
    
    \begin{thm} \label{thm:DissiTurnpikeDistr}
        Assume that the stochastic optimal control problem \eqref{eq:stochOCP} is strictly distributionally dissipative with respect to a metric $d_D$. Then it has the distributional turnpike property with respect to the same metric. 
    \end{thm}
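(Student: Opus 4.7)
The plan is to mimic the proof of Theorem~\ref{thm:DissiTurnpikeLp} essentially verbatim, only replacing the pathwise storage function $\lambda$ by the distributional storage function $\Lambda$ from Definition~\ref{defn:DissiDistr}, the $L^r$-distance by $d_D(P_{X(k)},\varrho_X^s)$, and the $\K_\infty$-function $\alpha$ from Definition~\ref{defn:DissiLp} by the one from Definition~\ref{defn:DissiDistr}. The key structural fact that makes this work is the same telescoping argument combined with the strict dissipation inequality, plus the fact that $\Lambda$ is bounded from below so that the telescoped sum can be bounded above uniformly in $N$.

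More concretely, I would proceed as follows. Let $M\in\R$ be a uniform lower bound on $\Lambda$ (which exists by Definition~\ref{defn:DissiDistr}) and fix a constant $C_c > \Lambda(P_{X_0}) - M$. For any admissible $\mathbf{U} \in \FF^N(X_0)$ with $J_N(X_0,\mathbf{U}) \leq \delta + N\ell(\mathbf{X}^s,\mathbf{U}^s)$, introduce the rotated cost
\begin{equation*}
\begin{split}
    \tilde{J}_N(X_0,\mathbf{U}) := \sum_{k=0}^{N-1} \Big( \ell(X(k),U(k)) - \ell(\mathbf{X}^s,\mathbf{U}^s) \\
    + \Lambda(P_{X(k)}) - \Lambda(P_{X(k+1)}) \Big).
\end{split}
\end{equation*}
By telescoping the $\Lambda$-terms and invoking both the performance bound and the lower bound on $\Lambda$, one obtains
\begin{equation*}
    \tilde{J}_N(X_0,\mathbf{U}) = J_N(X_0,\mathbf{U}) - N\ell(\mathbf{X}^s,\mathbf{U}^s) + \Lambda(P_{X_0}) - \Lambda(P_{X(N)}) \leq \delta + C_c.
\end{equation*}

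The remainder of the argument is a direct contradiction proof. Let $\alpha_3 := \alpha \in \K_\infty$ be the function from the strict distributional dissipation inequality \eqref{eq:DissiIneqDistr}, and suppose for contradiction that $\Dset_\eps < N - (\delta + C_c)/\alpha_3(\eps)$. Then there exists a set $\mathcal{M} \subset \{0,\ldots,N-1\}$ of cardinality strictly greater than $(\delta+C_c)/\alpha_3(\eps)$ on which $d_D(P_{X(k)},\varrho_X^s) > \eps$. Applying \eqref{eq:DissiIneqDistr} term-by-term gives
\begin{equation*}
    \tilde{J}_N(X_0,\mathbf{U}) \geq \sum_{k=0}^{N-1} \alpha_3(d_D(P_{X(k)},\varrho_X^s)) \geq \sum_{k \in \mathcal{M}} \alpha_3(\eps) > \delta + C_c,
\end{equation*}
contradicting the upper bound above. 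Hence $\Dset_\eps \geq N - (\delta+C_c)/\alpha_3(\eps)$, which is precisely the distributional turnpike property with the same metric $d_D$.

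I do not anticipate any real obstacle here: the only subtlety relative to the $L^r$ proof is that the storage function $\Lambda$ is a functional on $\mathcal{P}(\X)$ rather than on $\Llp[n]{\X}$, but this plays no role in the telescoping since we only use boundedness from below and the fact that $P_{X(k)}$ is well-defined for each $k$. One small bookkeeping point worth verifying is that the dissipation inequality \eqref{eq:DissiIneqDistr} indeed applies to every admissible pair $(X(k),\pi_k(X(k))) \in \Y$ appearing along an admissible Markov-policy trajectory, which is immediate from the definition of $\FF^N(X_0)$.
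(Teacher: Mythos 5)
Your proposal is correct and coincides with the paper's own proof: the same choice of $C_c$ via a lower bound on $\Lambda$, the same telescoping of the rotated cost, and the same contradiction argument with $\alpha_3=\alpha$ from the strict dissipation inequality. No differences worth noting.
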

    \begin{proof}
        \JS{Choose $C_c > \lambda(P_{X_0}) - M \geq 0$}
        where $M \in \R$ is a bound on $\Lambda$ from Definition \ref{defn:DissiDistr}.
        Then for $(\mathbf{X}^s,\mathbf{U}^s) \sim (\varrho^s_X,\pi^s)$ and all $\mathbf{U} \in \FF^N(X_0)$ with $J_N(X_0,\mathbf{U}) < \delta + N\ell(\mathbf{X}^s,\mathbf{U}^s)$ we get 
        \begin{equation} \label{eq:turnpikeContr2}
            \begin{split}
                \JS{\hat{J}}_N(X_0,\mathbf{U}) \JS{:=}&  \sum_{k=0}^{N-1} \ell(\XtrajShort{k},U(k)) - \JS{\ell(\mathbf{X}^s,\mathbf{U}^s)} \\
                &\qquad \qquad  + \Lambda(P_{\XtrajShort{k}}) - \Lambda(P_{\XtrajShort{k+1}}) \\
                =& J_N(X_0,\mathbf{U}) - N \ell(\mathbf{X}^s,\mathbf{U}^s) \\
                &\qquad + \Lambda(P_{X_0}) - \Lambda(P_{\XtrajShort{N}}) \leq \delta + C_c.
            \end{split}
        \end{equation}
        Now assume that $\Dset_{\eps} < N-(\delta+C_c)/\alpha_3(\eps)$ with $\alpha_3 = \alpha \in \K_{\infty}$ from Definition~\ref{defn:DissiDistr}. Then there is a set $\mathcal{M} \subset \lbrace 0,\ldots,N-1 \rbrace$ of $N-\Dset_{\eps} > (\delta+C_c)/\alpha_3(\eps)$ time instants such that 
        $d_D(P_{X(k)}, \varrho_X^s) \geq \eps$ for all $k \in \mathcal{M}$. Using inequality \eqref{eq:DissiIneqDistr} from Definition~\ref{defn:DissiDistr}, this implies 
        \begin{equation*}
            \begin{split}
                \JS{\hat{J}}_N(X_0,\mathbf{U}) \geq& \sum_{k=0}^{N-1} \alpha_3\left(d_D(P_{\XtrajShort{k}}, \varrho_X^s) \right) \\
                \geq& \sum_{k \in \mathcal{M}} \alpha_3\left(d_D(P_{\XtrajShort{k}}, \varrho_X^s) \right) \\
                >& \dfrac{\delta+C_c}{\alpha_3(\eps)} \alpha_3(\eps) = \delta + C_c
            \end{split}
        \end{equation*}
        which contradicts \eqref{eq:turnpikeContr2} and, thus, proves the theorem.
    \end{proof}

    \JS{Theorem~\ref{thm:DissiTurnpikeDistr} holds for all metrics $d_D$, and thus, also for the Wasserstein metric and the Lévy–Prokhorov metric from Definition~\ref{def:wasserstein_weak} which are under special consideration in this paper.}
    \JS{This and all the other main results of this paper are summarized in Figure~\ref{fig:sketch_of_results}.}

    \begin{figure*}
	\begin{center}
        \resizebox{0.95\textwidth}{!}{%
		\begin{tikzpicture}[squarednode/.style={rectangle, draw=black!60, fill=gray!5, very thick, minimum size=5mm}]

  
			\node (DLr) at (1,0) [squarednode,align=center] {Strict $L^r$\\ dissipativity};
            \node (wDDistr) at (1,-5) [squarednode,align=center] {Strict weak distributional\\ dissipativity};
            \node (WpDDistr) at (1,-7) [squarednode,align=center] {Strict Wasserstein dissipativity \\ of order $r$};

			\node (TLr) at (9,0) [squarednode,align=center] {$L^r$ turnpike \\ property};
			\node (TProb) at (9,-2.5) [squarednode,align=center] {Pathwise-in-probability \\ turnpike property};
			\node (wTDistr) at (9,-5) [squarednode,align=center] {Weak distributional \\ turnpike property};
            \node (WpTDistr) at (13.5,-5) [squarednode,align=center] {Wasserstein turnpike \\ property of order $r$};
            \node (TMoment) at (13.5,-7.5) [squarednode,align=center] {$r$-th moment \\ turnpike property};

            \node () at (5.,0.4) [] {Theorem \ref{thm:DissiTurnpikeLp}}; 
            \node () at (-1.3,-3.2) [align=center] {\JS{Theorem} \\ \JS{\ref{thm:DissiLrToDissiDistr}}}; 
            \node () at (2.0,-2.5) [align=center] {\JS{Theorem} \\ \JS{\ref{thm:DissiLrToDissiDistr}}}; 
            \node () at (5.1,-4.6) [] {Theorem \ref{thm:DissiTurnpikeDistr}}; 
            \node () at (5.1,-6.6) [] {Theorem \ref{thm:DissiTurnpikeDistr}}; 
            \node () at (7.6,-1.1) [] {Theorem \ref{thm:TurnpikeLpProb}}; 
            \node () at (7.6,-3.8) [] {Theorem \ref{thm:TurnpikeProbDistr}}; 
            \node () at (14.5,-6.2) [align=center] {Theorem \\ \ref{thm:TurnpikeWpMoments}}; 
            \node () at (12.5,-0.7) [align=center] {Theorem \\ \ref{thm:TurnpikeLpWp}}; 

			\draw[-implies, double, line width=2pt] (DLr) to (TLr);

			\draw[-implies, double, line width=2pt] (TLr) to (TProb);
			\draw[-implies, double, line width=2pt] (TProb) to (wTDistr);

            \draw[-implies, double, line width=2pt] (wDDistr) to (wTDistr);
            \draw[-implies, double, line width=2pt] (WpDDistr) to [bend right=12] (WpTDistr);

			\draw[-implies, double, line width=2pt] (TLr) to [bend left=35] (WpTDistr);
            \draw[-implies, double, line width=2pt] (DLr) to [bend right=75] (WpDDistr);

            \draw[-implies, double, line width=2pt] (WpTDistr) to (TMoment);

            \draw[-implies, double, line width=2pt] (DLr) to (wDDistr);
		\end{tikzpicture} }
    \caption{Schematic sketch of the main results.} \label{fig:sketch_of_results}
	\end{center}
    \end{figure*}
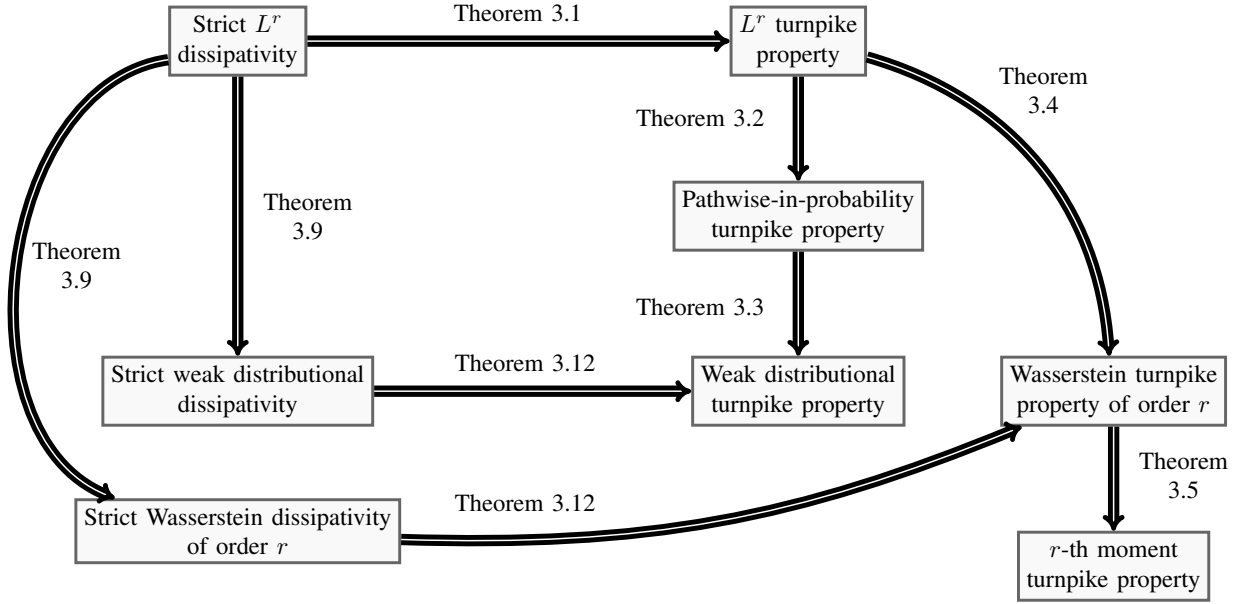

    \section{Example} \label{sec:Example}

    In this section, we illustrate the theoretical results of this paper. For a linear-quadratic example illustrating many of the results in this paper we refer to \cite{CDCExtension}. Here we provide an example showing that strict dissipativity in $L^r$ can also hold for an optimal control problem with nonlinear dynamics and non-quadratic cost. To this end, consider the one-dimensional stochastic optimal control problem 
    \begin{equation} \label{eq:example}
        \begin{split}
            &\min_{\mathbf{U}} J_N(X_0,\mathbf{U}) = \sum_{k=0}^{N-1} \Exp{(U(k) - X(k) )^4} + \gamma \Exp{U(k)}^2  \\
            &\quad s.t. ~  X(k+1) = (U(k)-X(k))^2 + W(k), ~ X(0) = X_0 \\
            & \qquad  X(k) \in \Llp[4]{\R}, ~ U(k) \in \Lp[4]{\R} \\ 
        \end{split}
    \end{equation}
    with $\gamma \geq 0$, $W(k) \sim \varrho_W \in \PP_4(\JS{\R})$, $\Exp{W(k)} = 0$, and the $W(k)$ are stochastically independent pairwise for all $k=0,\ldots,N$ and from $X_0$. Thus, problem \eqref{eq:example} satisfies all assumptions of our setting. 
    
    Furthermore, it is easy to see that the pair $(\mathbf{X}^s,\mathbf{U}^s)$ given by 
    \begin{equation} \label{eq:statProcessesExample}
    \begin{split}
        \XstrajShort{k+1} &= (U^s(k) - \XstrajShort{k})^2 + W(k) = W(k), \\
        U^s(k) &= \XstrajShort{k}
    \end{split}
    \end{equation}
    defines a stationary pair of stochastic processes for problem \eqref{eq:example} with stationary distribution $\varrho_X^s = \varrho_W$. Moreover, we have $\ell(\mathbf{X}^s,\mathbf{U}^s) = 0$ and for $\lambda(k,X(k)) = \Exp{(X(k) - X^s(k))^2}$ we get
    \begin{equation*}
        \begin{split}
            &\ell(\XtrajShort{k},U(k)) - \ell(\mathbf{X}^s,\mathbf{U}^s)  \\
            & \qquad \qquad + \lambda(k,\XtrajShort{k}) -\lambda(k+1,\XtrajShort{k+1}) \\
            =& \Exp{ ( U(k) - \XtrajShort{k} )^4} + \gamma  \Exp{U(k)}^2 + \Exp{(\XtrajShort{k} - \XstrajShort{k})^2} \\
            &\quad - \Exp{((U(k)-\XtrajShort{k}\JS{)}^2 + W(k) - \XstrajShort{k+1})^2} \\
            =& \Exp{ ( U(k) - \XtrajShort{k} )^4} + \gamma  \Exp{U(k)}^2 + \Exp{(\XtrajShort{k} - \XstrajShort{k})^2} \\
            &\quad - \Exp{(\JS{(}U(k)-\XtrajShort{k}\JS{)}^2)^2} \\
            =& \gamma  \Exp{U(k)}^2 + \Exp{(\XtrajShort{k} - \XstrajShort{k})^2} \geq \Vert \XtrajShort{k} - \XstrajShort{k} \Vert_{L^2}^2
        \end{split}
    \end{equation*}
    which shows that problem \eqref{eq:example} is strictly $L^2$ dissipative. 
    Thus, we can conclude that the optimal control problem \eqref{eq:example} is also strictly distributionally dissipative and exhibits the $L^2$, the pathwise-in-probability, the distributional turnpike property, and the turnpike property of the first two moments. 
    While strict dissipativity holds for arbitrary distributions $\varrho_W$ subject to the considered conditions, for our numerical simulation we have chosen $\varrho_W$ to be a two-point distribution such that $W(k)=a :=0.5$ with probability $p_a=0.2$ and $W(k)=b :=-0.125$ with probability $p_b = 0.8$, to facilitate the visualization of the numerical results. 
    Figure~\ref{fig:example} shows all possible realization paths subject to the considered noise of the optimal solutions for $X_0 = 7$, $\gamma=50$ and $N=3,5,10$ together with the optimal stationary process from \eqref{eq:statProcessesExample}\JS{.
    We can clearly observe that for increasing $N$ the realizations (in red, green, blue, orange, purple) spend more and more time in decreasing neighborhoods of the optimal stationary solution (in grey) if we increase the time horizon $N$. This is precisely what the pathwise turnpike property from Definition~\ref{defn:stochTurnpike} describes and, thus, the numerical simulations illustrate our theoretical findings.}
    
    \begin{figure}
        \centering
        \includegraphics[width=0.5\textwidth]{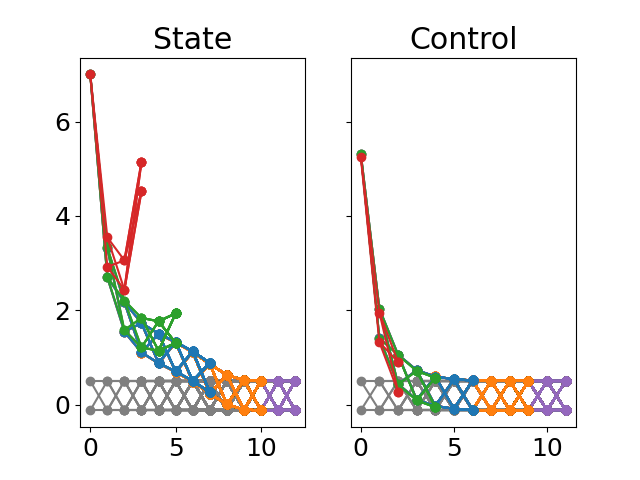}
        \caption{Evolution of all realization paths for the optimal trajectories (left) and controls (right) with $N=3$ in red, $N=5$ in green\JS{, $N=7$ in blue, $N=10$ in orange, and $N=12$ in purple}, as well as for the optimal stationary process (grey)}
        \label{fig:example}
    \end{figure}

    \section{Conclusion} \label{sec:Conclusion}
    In this paper we have shown that strict $L^r$ dissipativity implies various types of turnpike properties for stochastic optimal control problems. 
    We have proven, that $L^r$-based strict dissipativity also implies strict distributional dissipativity, which is equivalent to the dissipativity notion based on probability measures for stochastic problems introduced in \cite{Gros2022}. 
    We have seen that this dissipativity notion implies turnpike in the distributions of the optimal solutions. 
    In the non-strict case, $L^r$ dissipativity and distributional dissipativity have turned out to be equivalent.
    
    Some of the open questions that are not answered in this paper are: Under what conditions can one infer strict $L^r$ dissipativity from strict distributional dissipativity? 
    Does the occurrence of the turnpike property in one of its various stochastic forms imply one or the other type of strict dissipativity? 
    Can deterministic dissipativity results be used for the analysis of stochastically perturbed deterministic systems?
    Can one characterize the distributional robustness of the turnpike?

    \JS{Furthermore, we expect that our results can be used to analyze the performance and stability of stochastic MPC schemes. First performance results of this type have been obtained in \cite{PerformanceCDC}. Regarding the stability analysis, the different forms of our dissipativity and turnpike results (in $L^r$, in probability, and in distribution) can be very helpful, since each is directly related to a corresponding stochastic stability property, such as ``stability in the $r$-th mean'', ``stability in probability'', and ``stability in distribution''. Elaborating these connections and using them for the stability analysis of stochastic MPC algorithms is a topic of our current research.} 
   
\section*{References}
    \bibliographystyle{abbrv}
    \bibliography{references}

\end{document}